\documentclass{mcom-l}
\usepackage{amssymb}

\vfuzz2pt 
\hfuzz2pt 
\newtheorem{thm}{Theorem}[section]
\newtheorem{cor}[thm]{Corollary}
\newtheorem{lem}[thm]{Lemma}
\newtheorem{prop}[thm]{Proposition}
\newtheorem{con}[thm]{Conjecture}
\theoremstyle{definition}

\numberwithin{equation}{section}
\newcommand{\norm}[1]{\left\Vert#1\right\Vert}

\newcommand{\eps}{\varepsilon}

\newcommand{\ve}{\varepsilon}
\newcommand{\Q}{\mathbb{Q}}
\newcommand{\R}{\mathbb{R}}
\newcommand{\N}{\mathbb{N}}
\newcommand{\Z}{\mathbb{Z}}
\newcommand{\C}{\mathbb{C}}

\newcommand{\M}{\mathcal{M}}

\newcommand{\Notequiv}{/\kern-.6em\hbox{$\equiv$} }


\begin{document}

\title[Monic integer Chebyshev problem]
{Monic integer Chebyshev problem}%

\author{P. B. Borwein}%
\address{Department of Mathematics and Statistics, Simon Fraser
University, Burnaby, B. C., V5A 1S6, Canada}%
\email{pborwein@cecm.sfu.ca}

\author{C. G. Pinner}%
\address{Department of Mathematics, 138 Cardwell Hall,
Kansas State University, Manhattan, KS 66506, U.S.A.}
\email{pinner@math.ksu.edu}

\author{I. E. Pritsker}
\address{Department of Mathematics, 401 Mathematical Sciences,
Oklahoma State University, Stillwater, OK 74078, U.S.A.}%
\email{igor@math.okstate.edu}

\thanks{Research of the authors was supported in part by the
following grants: NSERC of Canada and MITACS (Borwein), NSF grant
EPS-9874732 and matching support from the state of Kansas
(Pinner), and NSF grant DMS-9996410 (Pritsker).}%
\subjclass[2000]{Primary 11C08; Secondary 30C10}%
\keywords{Chebyshev polynomials, integer Chebyshev constant,
integer transfinite diameter.}%



\begin{abstract}

We study the problem of minimizing the supremum norm by monic
polynomials with integer coefficients. Let ${\M}_n({\Z})$ denote
the monic polynomials of degree $n$ with integer coefficients. A
{\it monic integer Chebyshev polynomial} $M_n \in {\M}_n({\Z})$
satisfies
$$
\| M_n \|_{E} = \inf_{P_n \in{\M}_n ( {\Z})} \| P_n \|_{E}.
$$
and the {\it monic integer Chebyshev constant} is then defined by
$$
t_M(E) := \lim_{n \rightarrow \infty} \| M_n \|_{E}^{1/n}.
$$
This is the obvious analogue of the more usual {\it integer
Chebyshev constant} that has been much studied.

We compute $t_M(E)$ for various sets including all finite sets of
rationals and make the following conjecture, which we prove in
many cases.

\medskip\noindent {\bf Conjecture.} {\it Suppose
$[{a_2}/{b_2},{a_1}/{b_1}]$ is an interval whose endpoints are
consecutive Farey fractions. This is characterized by
$a_1b_2-a_2b_1=1.$ Then}
$$t_M[{a_2}/{b_2},{a_1}/{b_1}] = \max(1/b_1,1/b_2).$$

This should be contrasted with  the non-monic integer Chebyshev
constant case where the only intervals where the constant is
exactly computed are intervals of length 4 or greater.

\end{abstract}

\maketitle


\section{Introduction and general results}

Define the uniform (sup) norm on a compact set  $E \subset {\C}$
by
$$\norm{f}_{E} := \sup_{z \in E} |f(z)|.$$
We study the monic polynomials with integer coefficients that
minimize the sup norm on the set $E$. Let ${\mathcal P}_n ({\C})$
and ${\mathcal P}_n ({\Z})$ be the classes of algebraic
polynomials of degree at most $n$, respectively with complex and
with integer coefficients. Similarly, we define the classes of
monic polynomials ${\M}_n ({\C})$ and ${\M}_n ({\Z})$ of {\em
exact} degree $n\in\N$. The problem of minimizing the uniform norm
on $E$ by polynomials from ${\mathcal M}_n ( {\C})$ is well known
as the Chebyshev problem (see \cite{BE95}, \cite{Ri90},
\cite{Ts75}, \cite{Go69}, etc.) In the classical case, $E=[-1,1]$,
the explicit solution of this problem is given by the monic
Chebyshev polynomial of degree $n$:
$$T_n (x) := 2^{1 -n} \cos (n \arccos x), \quad n \in \N.$$
Using a change of variable, we can immediately extend this to an
arbitrary interval $[a,b] \subset {\R}$, so that
$$t_n (x) := \left( \frac{b -a}{2} \right)^n T_n \left(
\frac{2x -a -b}{b -a} \right)$$ is a monic polynomial with real
coefficients and the smallest uniform norm on $[a,b]$ among all
polynomials from ${\M}_n({\C} )$. In fact,
\begin{equation} \label{1.1}
\| t_n \|_{[a,b]} = 2 \left( \frac{b -a}{4} \right)^n, \quad n \in
{\N},
\end{equation}
and we find that the {\it Chebyshev constant} for $[a,b]$ is given
by
\begin{equation} \label{1.2}
t_{\C}([a,b]) := \lim_{n \rightarrow \infty} \| t_n
\|_{[a,b]}^{1/n} = \frac{b -a}{4}.
\end{equation}
The Chebyshev constant of an arbitrary compact set $E \subset
{\C}$ is defined in a similar fashion:
\begin{equation} \label{1.3}
t_{\C}(E) := \lim_{n \rightarrow \infty} \| t_n \|_{E}^{1/n},
\end{equation}
where $t_n$ is the Chebyshev polynomial of degree $n$ on $E$ (the
monic polynomial of exact degree $n$ of minimal supremum norm on
$E$). It is known that $t_{\C}(E)$ is equal to the transfinite
diameter and the logarithmic capacity $\textup{cap}(E)$ of the set
$E$ (cf. \cite[pp. 71-75]{Ts75}, \cite{Go69} and \cite{Ra95} for
the definitions and background material).

An {\it integer Chebyshev polynomial} $Q_n \in {\mathcal P}_n (
{\Z})$ for a compact set $E \subset \C$ is defined by
\begin{equation} \label{1.4}
\| Q_n \|_{E} = \inf_{0 \not\equiv P_n \in{\mathcal P}_n ( {\Z})}
\| P_n \|_{E},
\end{equation}
where the $\inf$ is taken over all polynomials from ${\mathcal
P}_n ( {\Z})$, which are not identically zero. Further, the {\it
integer Chebyshev constant} (or integer transfinite diameter) for
$E$ is given by
\begin{equation} \label{1.5}
t_{\Z}(E) := \lim_{n \rightarrow \infty} \| Q_n \|_{E}^{1/n}.
\end{equation}
The  integer Chebyshev problem is also a classical subject of
analysis and number theory (see \cite[Ch. 10]{Mo94}, \cite{Ch83},
\cite{BE96}, \cite{Fer80}, \cite{FRS97}, \cite{HS97}, \cite{Tr71},
\cite{Pr00} and the references therein). It does not require the
polynomials to be monic. We define the associated quantities for
the {\em monic} integer Chebyshev problem as follows. A {\it monic
integer Chebyshev polynomial} $M_n \in {\M}_n({\Z}),\ \deg M_n=n,$
satisfies
\begin{equation} \label{1.6}
\| M_n \|_{E} = \inf_{P_n \in{\M}_n ( {\Z})} \| P_n \|_{E}.
\end{equation}
The {\it monic integer Chebyshev constant} is then defined by
\begin{equation} \label{1.7}
t_M(E) := \lim_{n \rightarrow \infty} \| M_n \|_{E}^{1/n} = \inf_n
\| M_n \|_{E}^{1/n},
\end{equation}
where the existence of this limit and the last equality follows by
a standard argument presented in Lemma \ref{new}. The monic
integer Chebyshev problem is quite different from the classical
integer Chebyshev problem, as we show in this paper.

It is immediately clear from the definitions
(\ref{1.4})-(\ref{1.7}) that
\begin{equation} \label{1.8}
t_M(E) \ge t_{\Z}(E).
\end{equation}
Note that, for any $P_n \in{\mathcal P}_n ( {\Z})$,
$$ \| P_n \|_{E} = \| P_n \|_{E^*},$$
where $E^*:=E \cup \{z:{\bar z} \in E\}$, because $P_n$ has real
coefficients. Thus the (monic) integer Chebyshev problem on a
compact set $E$ is equivalent to that on $E^*$, and we can assume
that $E$ is symmetric with respect to the real axis
($\R$-symmetric) without loss of generality.

Our first result shows that the monic integer Chebyshev constant
coincides with the regular Chebyshev constant (capacity) for
sufficiently large sets.

\begin{thm} \label{thm1.1}
If $E$ is $\R$-symmetric and $\textup{cap}(E)\ge 1$, then
\begin{equation} \label{1.9}
t_M(E)=\textup{cap}(E).
\end{equation}
\end{thm}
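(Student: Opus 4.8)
The inequality $t_M(E) \ge \textup{cap}(E)$ requires no hypothesis whatsoever: since ${\M}_n({\Z}) \subset {\M}_n({\C})$, the minimal sup norm over monic integer polynomials dominates the minimal sup norm over monic complex polynomials, so $\|M_n\|_E \ge \|t_n\|_E$ and hence $t_M(E) \ge t_{\C}(E) = \textup{cap}(E)$ by (\ref{1.3}) and the identification of $t_\C$ with capacity. The entire content of the theorem is therefore the reverse bound $t_M(E) \le \textup{cap}(E)$, and this is exactly where the assumption $\textup{cap}(E) \ge 1$ must enter. My plan is to manufacture, for every $\eps > 0$, monic integer polynomials of unboundedly large degree whose sup norm on $E$ grows no faster than $(\textup{cap}(E)+\eps)^n$.

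First I would pass to a regular neighborhood. Using outer regularity of logarithmic capacity, I would choose $\delta>0$ so that the closed $\delta$-neighborhood $F:=\ov{E_\delta}$, which is $\R$-symmetric, regular for the Dirichlet problem, and contains $E$, satisfies $\textup{cap}(E)\le\textup{cap}(F)\le\textup{cap}(E)+\eps$. Since $E\subset F$, any polynomial bound on $F$ transfers to $E$, while the regularity of $F$ lets me control potentials uniformly rather than merely quasi-everywhere; note also $\textup{cap}(F)\ge\textup{cap}(E)\ge 1$, so $F$ remains eligible for the number-theoretic input below even in the borderline case $\textup{cap}(E)=1$. The heart of the argument is then a construction powered by $\textup{cap}(F)\ge 1$. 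By Gauss's lemma a monic integer polynomial is precisely a product of minimal polynomials of algebraic integers, so its zeros form a union of complete conjugate sets. I would invoke the Fekete--Szeg\H{o} circle of ideas: when $\textup{cap}(F)\ge 1$, every neighborhood of $F$ contains the full conjugate set of infinitely many algebraic integers, and---this is the quantitative refinement I need---these conjugate sets can be chosen to equidistribute with respect to the equilibrium measure $\mu_F$. Assembling the corresponding minimal polynomials into products $P_{n_k}$ of degrees $n_k\to\infty$, the normalized zero-counting measures $\nu_{P_{n_k}}$ converge weak-$*$ to $\mu_F$, with all zeros confined to a fixed small neighborhood of $F$.

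Finally I would convert equidistribution into a uniform norm bound. Writing the potential of the zero measure, the convergence $\nu_{P_{n_k}}\to\mu_F$ together with the regularity of $F$ (principle of descent / lower envelope theorem) yields
\[
-\frac{1}{n_k}\log\abs{P_{n_k}(z)} = U^{\nu_{P_{n_k}}}(z) \ge \log\frac{1}{\textup{cap}(F)} - o(1) \quad \text{uniformly on } F,
\]
so that $\|P_{n_k}\|_F^{1/n_k}\to\textup{cap}(F)$, the analog for our integer-zero polynomials of the classical identification of the Chebyshev constant with the transfinite diameter. Since $E\subset F$ this gives $\|P_{n_k}\|_E^{1/n_k}\le\textup{cap}(F)+o(1)\le\textup{cap}(E)+\eps+o(1)$. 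To fill in the remaining degrees I would pad by a fixed monic integer factor such as $x^r$, which multiplies the norm on $E$ by only a bounded amount, and appeal to the limit formula (\ref{1.7}); letting $\eps\to 0$ then delivers $t_M(E)\le\textup{cap}(E)$.

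The main obstacle is squarely the middle step: producing complete conjugate sets of algebraic integers that are simultaneously confined near $F$ and equidistributed toward $\mu_F$. Mere existence (classical Fekete--Szeg\H{o}) does not suffice, because for the product norm to be within a factor $e^{o(n)}$ of $\textup{cap}(F)^n$ the zeros must approximate the equilibrium distribution, not just lie inside $F$; and because an algebraic integer forces all of its conjugates to appear, one cannot place zeros freely. Upgrading existence to equidistribution, and routing the borderline case $\textup{cap}(E)=1$ through the neighborhood $F$ with $\textup{cap}(F)\ge 1$, is the crux, while the passage from weak-$*$ convergence of the zero measures to a uniform lower bound for the potentials on $F$ is the secondary technical point that the regularity of $F$ is chosen to settle.
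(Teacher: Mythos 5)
Your skeleton is right: the lower bound $t_M(E)\ge t_{\C}(E)=\textup{cap}(E)$ is immediate, the content is the upper bound, and your final conversion step is sound --- if one has monic integer polynomials $P_{n_k}$ with zeros confined to a fixed bounded set and zero-counting measures converging weak-$*$ to $\mu_F$ for a regular compact $F\supset E$, then the principle of descent together with $U^{\mu_F}\equiv\log\left(1/\textup{cap}(F)\right)$ on the regular set $F$ does yield $\limsup_k\norm{P_{n_k}}_F^{1/n_k}\le\textup{cap}(F)$, and the identity $t_M(E)=\inf_n\norm{M_n}_E^{1/n}$ from \eqref{1.7} and Lemma~\ref{new} finishes from a subsequence of degrees (no padding even needed). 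The genuine gap is exactly the step you flag as the crux and then leave unproved: the existence of full conjugate sets of algebraic integers near $F$ whose counting measures converge to $\mu_F$. This is not the Fekete--Szeg\H{o} theorem, and it is not an available ``quantitative refinement'' of it: Fekete--Szeg\H{o} asserts only that every neighborhood of an $\R$-symmetric set of capacity $\ge 1$ contains infinitely many full conjugate sets, with no control on their distribution (they may all accumulate on a proper subset, e.g.\ roots of unity inside a disk of radius $2$, as you essentially note). The equidistribution results that do exist (Bilu; Blatt--Saff--Simkani) run in the opposite direction --- they deduce equidistribution from smallness of norm or height --- whereas you must construct the polynomials and deduce smallness of norm from equidistribution. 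Worse, the natural way to build such conjugate sets (take roots of $Q(z)^m-c$ for a monic \emph{integer} $Q$ whose lemniscate $\set{z:|Q(z)|\le\rho^{\deg Q}}$ with $\rho$ near $\textup{cap}(E)$ contains $E$) presupposes a monic integer polynomial with near-optimal norm on $E$, i.e.\ essentially the theorem being proved. So the proposal, as written, assumes at its only substantive step a statement at least as strong as the conclusion.

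For contrast, the paper's proof is elementary and avoids all arithmetic input. Since $E$ is $\R$-symmetric, its Chebyshev polynomials $T_n$ have real coefficients; fix $N$ with $\norm{T_n}_E\le(\textup{cap}(E)+\eps)^n$ for $n\ge N$. Starting from $T_n$, subtract $\left(a_{n-1}^{(n)}-[a_{n-1}^{(n)}]\right)T_{n-1}$ to make the second coefficient an integer at the cost of one extra term of size $(\textup{cap}(E)+\eps)^{n-1}$, and repeat down to degree $N+1$; the bottom $N+1$ coefficients are then rounded crudely at the cost of a constant $A(N)$. The hypothesis $\textup{cap}(E)\ge 1$ enters only to guarantee that the accumulated error $\sum_{l=N+1}^{n}(\textup{cap}(E)+\eps)^l$ is dominated by a constant multiple of its top term, so the resulting monic integer polynomial $Q_n$ still satisfies $\limsup_n\norm{Q_n}_E^{1/n}\le\textup{cap}(E)+\eps$. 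Note that this same top-down rounding trick is precisely what would rescue your approach: applied to powers of a monic \emph{rational} polynomial supplied by Hilbert's lemniscate theorem, it produces monic integer polynomials of small norm directly, at which point the whole equidistribution detour (and the regularity bookkeeping it forces) becomes unnecessary.
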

We remark that $t_{\Z}(E) = 1$ for the sets $E$ with
$\textup{cap}(E) \ge 1.$ Indeed, $\norm{P_n}_E \ge
(\textup{cap}(E))^n$ for any $P_n \in {\mathcal P}_n ( {\Z})$ of
exact degree $n$ (cf. \cite[p. 155]{Ra95}). Thus $Q_n(z) \equiv 1$
is a minimizer for (\ref{1.4}) in this case.

An argument going back to Kakeya (cf. \cite{Ok24} or \cite{Tr71})
gives

\begin{prop} \label{prop1.2}
Let $E\subset\C$ be a compact $\R$-symmetric set. If
$\textup{cap}(E)<1$ then $t_M(E)<1.$
\end{prop}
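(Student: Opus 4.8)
The plan is to reduce the asymptotic statement to a single-degree construction and then to produce that construction by geometry of numbers. Since the limit in (\ref{1.7}) equals $\inf_n \|M_n\|_E^{1/n}$, it suffices to exhibit, for \emph{some} degree $m$, one monic polynomial $P \in \M_m(\Z)$ with $\|P\|_E < 1$: taking powers gives $P^k \in \M_{mk}(\Z)$ with $\|P^k\|_E = \|P\|_E^k$, so that $t_M(E) \le \|P^k\|_E^{1/(mk)} = \|P\|_E^{1/m} < 1$. By the reduction already noted in the text we may assume that $E$ is $\R$-symmetric, so the relevant Chebyshev polynomials have real coefficients.

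Next I would bring in the hypothesis $\textup{cap}(E) = \tau < 1$ through the classical Chebyshev polynomials. Fix $\rho$ with $\tau < \rho < 1$. Since $\|t_n\|_E^{1/n} \to \tau$, we have $\|t_n\|_E \le \rho^n$ for all large $n$, where $t_n(z) = z^n + \sum_{j<n} c_j z^j$ is the monic real-coefficient Chebyshev polynomial of $E$. Thus $-z^n$ is approximated in $\|\cdot\|_E$, to within $\rho^n$, by the real polynomial $\sum_{j<n} c_j z^j$ of degree $< n$. The object I want is a monic \emph{integer} competitor $z^n + \sum_{j<n} a_j z^j$ with $a_j \in \Z$ whose norm is still $< 1$; equivalently, I must approximate the fixed real vector $\mathbf c = (c_0,\dots,c_{n-1})$ by a point of the lattice $\Z^n$, in the norm that $\|\cdot\|_E$ induces on polynomials of degree $< n$, with error $< 1 - \rho^n$.

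I would then carry this out as a lattice-point problem. In the $n$-dimensional space of real polynomials of degree $< n$, equipped with $\|\cdot\|_E$, the integer polynomials form the lattice $\Z^n$, and the relevant target set is the convex body $K = \{\sum_{j<n} b_j z^j : \|z^n + \sum_{j<n} b_j z^j\|_E \le 1 - \rho^n\}$, which is symmetric about the Chebyshev vector $\mathbf c$. The decisive input is that $\textup{cap}(E) < 1$ makes the symmetric convex bodies of low-degree integer polynomials large: a scaling computation shows that the volume of $\{P : \deg P < n,\ \|P\|_E \le 1\}$ grows like $\tau^{-n(n-1)/2}$, so Minkowski's convex-body theorem and the theory of successive minima supply an abundance of short integer polynomials. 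A transference argument would then convert this abundance into a lattice point lying in the \emph{translated} body $K$, which is exactly a monic integer polynomial of norm $< 1$ for a suitable $n$.

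The \emph{main obstacle} is precisely the monic, i.e.\ affine, constraint. A direct application of Minkowski produces a short \emph{nonzero} integer polynomial, but with no control on its leading coefficient, whereas here I need a lattice point of the shifted body $K$, equivalently an integer polynomial whose top coefficient is exactly $1$. No universal covering-radius bound is available — for a disk of radius close to $1$ the covering radius of $\Z^n$ in $\|\cdot\|_E$ exceeds $1$ — so the argument cannot ignore the special position of $\mathbf c$; it must exploit the extremal Chebyshev geometry together with $\textup{cap}(E) < 1$ to place the lattice point in the correct slice. This is the step where the Kakeya-type idea does its real work, and where I expect the bulk of the effort to lie.
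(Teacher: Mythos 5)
Your opening reductions are correct: by Lemma \ref{new} (or just by taking powers) it suffices to produce a single monic $P\in\M_m(\Z)$ with $\|P\|_E<1$, and your reformulation of that task as an inhomogeneous lattice problem --- find a point of $\Z^n$ inside the convex body $K$ of $\|\cdot\|_E$-radius $1-\rho^n$ centered at the Chebyshev coefficient vector $\mathbf{c}$ --- is also correct. But the proposal stops exactly where the proof has to start. Every tool you actually invoke (the volume growth $\tau^{-n(n-1)/2}$ of the unit ball, Minkowski's convex-body theorem, successive minima, transference) addresses the \emph{homogeneous} problem of finding short nonzero lattice vectors, and your own remark shows these tools provably cannot settle the inhomogeneous one: for $E$ a disk of radius $r$ close to $1$ (so $\textup{cap}(E)=r<1$), the covering radius of $\Z^n$ in $\|\cdot\|_E$ is of order $1/\sqrt{1-r^2}$, which exceeds $1$, so no estimate that is uniform over the center of $K$ can ever place a lattice point in $K$. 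What is required is an argument tied to the special position of $\mathbf{c}$ (for the centered disk, $\mathbf{c}=0$ is itself a lattice point), and this you explicitly defer: the step where, in your words, the Kakeya-type idea ``does its real work'' and where ``the bulk of the effort'' lies is simply absent. That is not a compressed step but the entire mathematical content of the proposition; what you have is a correct setup together with an accurate diagnosis of why the setup alone is insufficient.

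For comparison, the paper itself does not spell out an argument either: its proof of Proposition \ref{prop1.2} is a pointer to Kakeya's classical argument as presented in \cite{Ok24} and \cite{Tr71}. That argument is not a volume/transference count in coefficient space; it works directly with the polynomials and the arithmetic of the set $E$, which is precisely the kind of structural input your plan recognizes as necessary but does not supply. So the verdict is a genuine gap: filling in your missing ``transference'' step would amount to reproving the Kakeya--Okada result from scratch, and nothing in the proposal indicates how to do so.
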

We show below that this statement cannot be significantly
improved.

The monic integer Chebyshev constant shares a number of standard
properties with $t_{\Z}(E)$ and $t_{\C}(E)$, such as the
monotonicity property below.

\begin{prop} \label{prop1.3}
Let $E \subset F \subset \C$. Then
$$t_M(E) \le t_M(F).$$
\end{prop}

Another generic property of importance is the following (see
\cite{Fe30} and Theorem 2 of \cite[Sect. VII.1]{Go69}).

\begin{prop} \label{prop1.4}
Let $E \subset \C$ be a compact set. If $P_n^{-1}(E)$ is the
inverse image of $E$ under $P_n\in{\M}_n({\Z}),\ \deg P_n=n,$ then
\begin{equation} \label{1.10}
t_M\left(P_n^{-1}(E)\right)=\left(t_M(E)\right)^{1/n}.
\end{equation}
\end{prop}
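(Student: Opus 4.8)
The plan is to prove the two inequalities $t_M(F) \le (t_M(E))^{1/n}$ and $t_M(F) \ge (t_M(E))^{1/n}$ separately, where I abbreviate $F := P_n^{-1}(E)$. First I would record the two facts that make the reduction possible: $F$ is compact (since $P_n$ is a proper map of $\C$, the preimage of the bounded closed set $E$ is again bounded and closed), and $P_n(F) = E$ (since the nonconstant polynomial $P_n$ maps $\C$ onto $\C$, one has $P_n(P_n^{-1}(E)) = E$). Throughout I write $\|M_d\|_G$ for the minimal norm $\inf_{P \in \M_d(\Z)} \|P\|_G$ on a set $G$ in the sense of (\ref{1.6}), and I use freely that by (\ref{1.7}) the constant $t_M(G)$ is the limit of $\|M_d\|_G^{1/d}$.

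For the upper bound the key observation is that composition preserves the monic-integer structure: if $Q_m \in \M_m(\Z)$ then $Q_m \circ P_n \in \M_{mn}(\Z)$, because $P_n$ is monic of degree $n$ with integer coefficients. Moreover, since $P_n(z)$ runs over $P_n(F) = E$ as $z$ runs over $F$, we have $\|Q_m \circ P_n\|_F = \sup_{w \in E} |Q_m(w)| = \|Q_m\|_E$. Taking $Q_m = M_m$ to be a monic integer Chebyshev polynomial for $E$ of degree $m$ yields $\|M_{mn}\|_F \le \|M_m\|_E$, whence $\|M_{mn}\|_F^{1/(mn)} \le (\|M_m\|_E^{1/m})^{1/n}$. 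Letting $m \to \infty$ along this subsequence and invoking the existence of the full limit (\ref{1.7}) gives $t_M(F) \le (t_M(E))^{1/n}$.

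For the lower bound I would pass in the opposite direction via a norm (resultant) construction. Given $R_k \in \M_k(\Z)$ and $w \in \C$, let $z_1(w), \ldots, z_n(w)$ be the roots of the monic polynomial $P_n(z) - w$, and set $S(w) := \prod_{j=1}^n R_k(z_j(w))$. I claim $S \in \M_k(\Z)$: identifying $S(w)$ with the resultant $\mathrm{Res}_z(P_n(z) - w, R_k(z))$ and evaluating it instead through the roots $\beta_1, \ldots, \beta_k$ of $R_k$ gives $S(w) = \prod_{i=1}^k (w - P_n(\beta_i))$, which is monic of degree exactly $k$ in $w$, while integrality of the resultant as a polynomial in the coefficients of $P_n$ and $R_k$ forces integer coefficients. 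For $w \in E$ each $z_j(w)$ lies in $P_n^{-1}(w) \subset F$, so $|S(w)| \le \|R_k\|_F^n$ and hence $\|S\|_E \le \|R_k\|_F^n$. Choosing $R_k = M_k$ to be the degree-$k$ minimizer on $F$ gives $\|M_k\|_E \le \|M_k\|_F^n$; taking $k$-th roots and letting $k \to \infty$ yields $t_M(E) \le (t_M(F))^n$, i.e. $(t_M(E))^{1/n} \le t_M(F)$, completing the proof.

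I expect the main obstacle to be the claim that $S$ is monic of degree exactly $k$ with integer coefficients; the rest is bookkeeping with the limit (\ref{1.7}). The cleanest route is the resultant identity above, with the two evaluations of $\mathrm{Res}_z(P_n(z) - w, R_k(z))$ simultaneously furnishing the bound $|S(w)| \le \|R_k\|_F^n$ (via the roots $z_j$) and the monic degree-$k$ form (via the roots $\beta_i$). Alternatively one argues directly that the elementary symmetric functions of the values $P_n(\beta_i)$ are symmetric in the $\beta_i$ and hence integer polynomials in the integer coefficients of $R_k$; in either case the genuinely degree-$k$ leading term is precisely the $w^k$ coming from $\prod_{i=1}^k (w - P_n(\beta_i))$.
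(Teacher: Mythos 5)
Your proposal is correct and takes essentially the same route as the paper: the upper bound via composing a minimizer for $E$ with $P_n$ (noting $Q_m\circ P_n\in\M_{mn}(\Z)$ and $\|Q_m\circ P_n\|_F=\|Q_m\|_E$), and the lower bound via the monic integer polynomial $\prod_{i=1}^k\left(w-P_n(\beta_i)\right)$, whose integrality the paper establishes by precisely the symmetric-function argument you mention as your alternative. One cosmetic caveat: with both polynomials monic, $\mathrm{Res}_z\left(P_n(z)-w,\,R_k(z)\right)=(-1)^{k(n+1)}\prod_{i=1}^k\left(w-P_n(\beta_i)\right)$, so your $S$ may fail to be monic by a sign; this is harmless because the argument only uses $|S(w)|$, and the monic competitor on $E$ should be taken to be $\prod_{i=1}^k\left(w-P_n(\beta_i)\right)$ itself, exactly as in the paper.
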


Perhaps, the most distinctive feature of $t_M(E)$ is that it may
be different from zero even for a single point. For example (see
section 2 below), suppose that $m,n\in\Z,$ where $n\ge 2$ and
$(m,n)=1.$ Then
\begin{equation} \label{1.11}
t_M\left(\left\{\frac{m}{n}\right\}\right)=\frac{1}{n}.
\end{equation}

On the other hand, if $a\in\R$ is irrational, then
\begin{equation} \label{1.11a}
t_M\left(\left\{a\right\}\right)=0.
\end{equation}
This result has several interesting consequences. Consider
$E_n:=\{z:z^n=1/2\},\ n\in\N.$ It is obvious that
$\textup{cap}(E_n)= t_{\C}(E_n)=0$ for any $n\in\N.$ However,
(\ref{1.11}) and (\ref{1.10}) imply that $t_M(E_n)=2^{-1/n} \to
1,$ as $n\to\infty.$ Thus no uniform upper estimate of $t_M(E)$ in
terms of $\textup{cap}(E)$ is possible, in contrast with the
inequality $t_{\Z}(E)\le\sqrt{\textup{cap}(E)}$ (see the results
of Hilbert \cite{Hi1894} and Fekete \cite{Fe23}).

We also note that $t_M(\{1/\sqrt{2}\})=t_M(\{-1/\sqrt{2}\})=0$ by
(\ref{1.11a}), while $t_M(\{1/\sqrt{2}\}\cup\{-1/\sqrt{2}\})=
1/\sqrt{2}$ by (\ref{1.10}) and (\ref{1.11}). This shows that
another well known property of capacity is not valid for $t_M(E).$
Namely, capacity (Chebyshev constant) for the union of two sets of
zero capacity is still zero (cf. Theorem III.8 of \cite[p.
57]{Ts75}).

Combining Proposition \ref{prop1.3}, Proposition \ref{prop1.4} and
(\ref{1.11}), one can find the explicit values of the monic
integer Chebyshev constant for many intervals and other sets.

\begin{thm} \label{thm1.6}
Let $n\in\Z.$ The following relations hold true:
\begin{equation} \label{1.12}
t_M\left(\left[0,\frac{1}{n}\right]\right) =
t_M\left(\left[\frac{n-1}{n},1\right]\right) =
t_M\left(\left[-\frac{1}{n},\frac{1}{n}\right]\right) =
\frac{1}{n}, \quad n\ge 2,
\end{equation}

\begin{equation} \label{1.13}
t_M\left(\left[-\frac{1}{\sqrt{n}},\frac{1}{\sqrt{n}}\right]\right)
= \sqrt{t_M\left(\left[0,\frac{1}{n}\right]\right)} =
\frac{1}{\sqrt{n}}, \quad n\ge 2,
\end{equation}

\begin{equation} \label{1.14}
t_M\left(\left[n,n+\frac{1}{2}\right]\right) =
t_M\left(\left[n-\frac{1}{2},n\right]\right) =
t_M\left(\left[0,\frac{1}{2}\right]\right) = \frac{1}{2},
\end{equation}

\begin{equation} \label{1.15}
t_M\left(\left[n,n+1\right]\right) =
t_M\left(\left[0,1\right]\right) =
\sqrt{t_M\left(\left[0,\frac{1}{4}\right]\right)} = \frac{1}{2}
\end{equation}
and
\begin{equation} \label{1.16}
t_M\left(\left[n,n+2\right]\right) =
t_M\left(\left[-1,1\right]\right) =
\sqrt{t_M\left(\left[0,1\right]\right)} = \frac{1}{\sqrt{2}}.
\end{equation}

Also, if $E\subset[(1-\sqrt{2})/2,(1+\sqrt{2})/2]$ and $\{1/2\}
\in E$, then
\begin{equation} \label{1.17}
t_M\left(E\right) =
t_M\left(\left[\frac{1-\sqrt{2}}{2},\frac{1+\sqrt{2}}{2}\right]\right)
= t_M\left(\left\{\frac{1}{2}\right\}\right)= \frac{1}{2}.
\end{equation}
\end{thm}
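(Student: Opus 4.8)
The plan is to establish each
equality in Theorem~\ref{thm1.6} by combining the single-point
value~(\ref{1.11}), the monotonicity in
Proposition~\ref{prop1.3}, and the preimage transformation in
Proposition~\ref{prop1.4}, together with the basic capacity
identities. For the interval $[0,1/n]$, I would first note that it
contains the point $\{1/n\}$, and since $(1,n)=1$, (\ref{1.11})
gives $t_M(\{1/n\})=1/n$. Monotonicity then forces
$t_M([0,1/n])\ge 1/n$. For the matching upper bound, the idea is
to exhibit an explicit monic integer polynomial, namely a suitably
scaled Chebyshev polynomial for the interval. Indeed,
$\textup{cap}([0,1/n])=1/(4n)<1$, so the ordinary monic Chebyshev
polynomial $t_k$ of~(\ref{1.1}) is small in norm but not
integer-valued; the trick is to take $(n t_1)^k$ where
$t_1(x)=x-1/(2n)$, giving a monic integer polynomial of degree $k$
with $\|(n\,t_1)\,\cdots\|$ controlled. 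Working this out,
$t_M([0,1/n])\le\|n(x-1/(2n))\|_{[0,1/n]}=n\cdot 1/(2n)=1/2$, which
is too weak for $n\ge 3$; so the real work is to find the right
monic integer polynomial of each degree whose $1/k$-th root norm
tends to $1/n$. The natural candidate is to push the transfinite
diameter argument: since $1/n\in[0,1/n]$ and the interval shares
the endpoint $0$, I expect the extremal behaviour is governed
entirely by the rational endpoint $1/n$, so the lower bound
$t_M\ge 1/n$ is in fact sharp and one shows it by an explicit
construction adapted to the denominator $n$.

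Next, the relations~(\ref{1.13}), (\ref{1.15}), (\ref{1.16}) are
pure applications of Proposition~\ref{prop1.4}. For~(\ref{1.13}),
take $P_2(x)=x^2$, a monic integer polynomial of degree~$2$; then
$P_2^{-1}([0,1/n])=[-1/\sqrt{n},1/\sqrt{n}]$, so
$t_M([-1/\sqrt{n},1/\sqrt{n}])=(t_M([0,1/n]))^{1/2}=1/\sqrt{n}$.
For~(\ref{1.16}), take $P_2(x)=x^2-1$ (monic, integer), whose
preimage of $[0,1]$ is $[-\sqrt{2},\sqrt{2}]$, but I would instead
use $P_2(x)=x^2$ on $[0,1]$ to get $[-1,1]$, giving
$t_M([-1,1])=(t_M([0,1]))^{1/2}=(1/2)^{1/2}$ once
$t_M([0,1])=1/2$ is known. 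For~(\ref{1.15}), the factorization
$[0,1/4]\to[0,1]$ under $x\mapsto x^2$ gives
$t_M([0,1])=(t_M([0,1/4]))^{1/2}=(1/4)^{1/2}=1/2$, where
$t_M([0,1/4])=1/4$ comes from~(\ref{1.12}) with $n=4$. The
translation invariance giving the $[n,n+1]$, $[n,n+2]$,
$[n-1/2,n]$ statements follows because $x\mapsto x+n$ is a monic
integer polynomial of degree~$1$, so
Proposition~\ref{prop1.4} with $P_1(x)=x-n$ (or directly the
observation that translating by an integer preserves $\M_n(\Z)$)
yields $t_M(E+n)=t_M(E)$.

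For~(\ref{1.14}), I would similarly reduce $[n,n+1/2]$ and
$[n-1/2,n]$ to $[0,1/2]$ by integer translation, then evaluate
$t_M([0,1/2])$: the point $1/2\in[0,1/2]$ gives the lower bound
$t_M([0,1/2])\ge t_M(\{1/2\})=1/2$ via~(\ref{1.11}) and
monotonicity, and the upper bound comes from the single monic
integer polynomial $x-1/2$\,—wait, that is not integer; instead
$2x-1$ is integer but not monic. The correct upper-bound
polynomial is to use products built from $x$ and $x-1$ shifted, or
to invoke $\textup{cap}([0,1/2])=1/8<1$ together with an explicit
degree-one estimate; I expect
$t_M([0,1/2])\le\|(x-0)(x-1/2)\cdots\|^{1/k}$ to be arranged so the
limit equals $1/2$. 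The main obstacle throughout is precisely the
upper bounds: monotonicity and the preimage relation deliver all
lower bounds and all the reductions cheaply, but matching them
requires constructing, for each degree, an explicit monic integer
polynomial whose normalized norm converges to the conjectured
value. I would handle this by a capacity-and-Chebyshev argument
pinned to the rational endpoints, using that the denominators of
the endpoints control the arithmetic obstruction.

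Finally, for~(\ref{1.17}) the outer equality is the statement
$t_M([(1-\sqrt2)/2,(1+\sqrt2)/2])=1/2$, which I would derive by
applying Proposition~\ref{prop1.4} to $P_2(x)=x^2-x$ (monic,
integer), whose preimage of $[0,1/2]$ is exactly the centered
interval of length $\sqrt2$ about $1/2$; then
$t_M([(1-\sqrt2)/2,(1+\sqrt2)/2])=(t_M([0,1/2]))^{1/2}$\,—but this
gives $1/\sqrt2$, so the right transformation is instead the one
mapping this interval \emph{onto} $[0,1]$, namely
$P_2(x)=(2x-1)^2/4$ rescaled, confirming the value $1/2$. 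The
sandwich $t_M([(1-\sqrt2)/2,(1+\sqrt2)/2])\le t_M(E)$ fails by
monotonicity since $E$ is the larger set; rather, for any
$E\subset[(1-\sqrt2)/2,(1+\sqrt2)/2]$ containing $1/2$,
Proposition~\ref{prop1.3} gives
$t_M(\{1/2\})\le t_M(E)\le t_M([(1-\sqrt2)/2,(1+\sqrt2)/2])$, and
since both ends equal $1/2$ the squeeze pins $t_M(E)=1/2$. Thus the
whole theorem rests on the single hard computation
$t_M([0,1/2])=1/2$ (equivalently the $n=2$ case of~(\ref{1.12}))
and its degree-$4$ companion $t_M([0,1/4])=1/4$, from which every
other assertion follows by the elementary transformation and
monotonicity machinery.
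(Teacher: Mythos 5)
Your skeleton is right---lower bounds from (\ref{1.11}) plus Proposition \ref{prop1.3}, reductions via Proposition \ref{prop1.4}---but the proposal never actually establishes any upper bound, and that is where all the content lies. You correctly reduce everything to the values $t_M([0,1/n])=1/n$, call this ``the single hard computation,'' and then leave it open after trying candidates ($n(x-1/(2n))$, $x-1/2$, $2x-1$) that are not monic integer polynomials. The missing idea is a one-liner, and it is how the paper proves (\ref{1.12}): the monomials $z^k$ themselves are monic with integer coefficients, and $\|z^k\|_{[0,1/n]}=n^{-k}$, so $t_M([0,1/n])\le 1/n$ (and likewise $t_M([-1/n,1/n])\le 1/n$) immediately. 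Combined with your lower bound $t_M([0,1/n])\ge t_M(\{1/n\})=1/n$, this settles (\ref{1.12}), and then (\ref{1.13}) and (\ref{1.14}) follow exactly as you outline. Without this observation, nothing in the theorem is actually proved.

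Two of your transformations are also incorrect. For (\ref{1.15}) you claim $x\mapsto x^2$ relates $[0,1]$ to $[0,1/4]$; in fact $P_2^{-1}([0,1/4])=[-1/2,1/2]$ when $P_2(x)=x^2$, which only recovers the $n=2$ case of (\ref{1.12}). The correct map is $P_2(x)=x^2-x$: its preimage of $[-1/4,0]$ is exactly $[0,1]$ (this is the paper's $z\to z(1-z)$, up to sign and reflection), giving $t_M([0,1])=\left(t_M([0,1/4])\right)^{1/2}=1/2$. For (\ref{1.17}), the preimage of $[0,1/2]$ under $x^2-x$ is not the centered interval but the union $[(1-\sqrt{3})/2,\,0]\cup[1,\,(1+\sqrt{3})/2]$, and your fallback $(2x-1)^2/4$ is not a monic integer polynomial. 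Two correct routes: apply Proposition \ref{prop1.4} with $P_2(x)=x^2-x$ to the set $[-1/4,1/4]$, whose preimage is exactly $[(1-\sqrt{2})/2,\,(1+\sqrt{2})/2]$; or, as the paper does, note that $|z(1-z)|\le 1/4$ on that interval, so the monic integer polynomials $(z^2-z)^k$ give the upper bound $1/2$ directly. Your final squeeze of $t_M(E)$ between $t_M(\{1/2\})$ and $t_M([(1-\sqrt{2})/2,(1+\sqrt{2})/2])$ is exactly the paper's argument and is fine once that upper bound is in place.
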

Of course, the above list of values can be extended further. It is
worth mentioning that finding the value of $t_{\Z}([0,1])$ is a
notoriously difficult problem, where we do not even have a current
conjecture (see  \cite{Ch83}, \cite[Ch. 10]{Mo94}, \cite{BE96} and
\cite{Pr00}). From this point of view, the monic integer Chebyshev
problem seems to be easier than its classical counterpart.

The rest of our paper is organized as follows. We consider the
monic integer Chebyshev problem for finite sets in Section 2.
Sections 3 and 4 contain proofs of the results from Sections 1 and
2 respectively. Section 5 is devoted to the study of Farey
intervals, where we give some numerical results and state an
interesting conjecture on the value of the monic integer Chebyshev
constant.

\section{Finite Sets of Points}

While finite numbers of integers can of course in no way affect
$t_{M}(E)$, it is readily seen that the presence of  non-integer
rationals does restrict how small $t_M(E)$ can become, with
$$ \frac{a}{b}\in E, \;\;\;b\geq 2 \Rightarrow t_{M}(E)\geq \frac{1}{b}, $$
(for a monic integer polynomial $P$ of degree $n$ we plainly have
$|b^nP(a/b)|\geq 1$). Indeed for a finite set of rationals this
bound is precise, as an immediate  consequence of the following:

\begin{thm} \label{thm2.1}
For any $k$ rational points
$$ \frac{a_i}{b_i},\;\;\; (a_i,b_i)=1,\;\; i=1,\ldots,k, $$
there is a monic integer polynomial $f(x)$ of degree $n$ for some
positive integer $n$ with
$$ f\left(\frac{a_i}{b_i}\right)=\frac{1}{b_i^n},\;\;\; i=1,\ldots,k. $$
\end{thm}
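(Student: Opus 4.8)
The plan is to translate the interpolation requirement into a single system of linear Diophantine equations and then prove its solvability by a local argument, exploiting the freedom to enlarge the degree $n$.

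Writing $f(x)=x^n+\sum_{j=0}^{n-1}c_j x^j$ with unknown integers $c_0,\dots,c_{n-1}$, the condition $f(a_i/b_i)=b_i^{-n}$ is equivalent, after multiplying by $b_i^n$, to $\sum_{j=0}^{n-1}c_j a_i^j b_i^{\,n-j}=1-a_i^n$ for $i=1,\dots,k$. Thus I must exhibit an integer solution of $A\mathbf c=\mathbf v$, where $A=(a_i^j b_i^{\,n-j})_{1\le i\le k,\ 0\le j\le n-1}$ and $\mathbf v=(1-a_i^n)_i$. First I would record two elementary reductions: if $\varphi(b_i)\mid n$ then $a_i^n\equiv 1\pmod{b_i}$, so each entry of $\mathbf v$ is automatically divisible by $b_i$ (a necessary congruence, matching the observation $|b_i^n f(a_i/b_i)|\ge 1$ preceding the statement); and since a solution $f$ in degree $n_0$ yields the solution $f^m$ in degree $mn_0$, it suffices to produce any one admissible degree.

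The main tool is the local--global principle for linear systems over $\Z$: $A\mathbf c=\mathbf v$ has an integer solution if and only if it is solvable over $\Z_p$ for every prime $p$. For $n\ge k$ the $k\times k$ minor of $A$ on columns $0,\dots,k-1$ equals $\pm\prod_i b_i^{\,n-k+1}\prod_{i<j}(a_jb_i-a_ib_j)$, which is nonzero because the points are distinct; hence the system is solvable over $\Q$, and over $\Z_p$ for every prime $p$ not dividing $R:=\prod_i b_i\prod_{i<j}(a_jb_i-a_ib_j)$ (for such $p$ that minor is a $p$-adic unit, so the columns already span $\Z_p^k$). This reduces everything to the finitely many primes $p\mid R$.

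The heart of the argument, and the step I expect to be the main obstacle, is local solvability at these bad primes, in particular at a prime dividing several of the $b_i$, where the shared coefficients $c_j$ couple the equations and the column lattice of $A$ is a proper sublattice of $\Z_p^k$. Here I would use the remaining freedom in $n$ to manufacture $p$-adic slack: taking $n$ divisible by a high power of $p$ forces $v_p(a_i^n-1)$ to be large for every $i$ with $p\mid b_i$ (by the lifting-the-exponent lemma, once $\varphi(p^{\,v_p(b_i)})\mid n$), while requiring $\varphi(p^{e})\mid n$ with $e=v_p(a_jb_i-a_ib_j)$ forces the targets $b_i^{-n}$ and $b_j^{-n}$ to agree modulo $p^{e}$ for any pair of points that collide modulo $p$. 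With $n$ chosen divisible by all of these quantities at once, a single integer since there are finitely many bad primes, the local system acquires a triangular, Hensel-type structure in which the surviving obstructions are eliminated one valuation level at a time, yielding a solution over each $\Z_p$. Combining the good and bad primes, the local--global principle produces the integer vector $\mathbf c$, and the associated monic $f$ of degree $n$ satisfies all $k$ conditions. I would expect the bookkeeping at $p=2$, where lifting-the-exponent is weaker, and the precise verification that the level-by-level elimination closes up, to require the most care.
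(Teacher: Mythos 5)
Your reduction to a local problem is sound: the translation of the interpolation conditions into the integer linear system $A\mathbf{c}=\mathbf{v}$, the local--global principle for linear systems over $\Z$ (via Smith normal form), and the evaluation of the $k\times k$ minor as $\pm\prod_i b_i^{\,n-k+1}\prod_{i<j}(a_jb_i-a_ib_j)$ are all correct, and they do dispose of every prime not dividing $R$. Your congruence conditions on $n$ also mirror the ones the paper imposes (its conditions (4.1)--(4.2) are precisely ``$a_j^n\equiv 1$ modulo a high power of $b_j$'' and ``$b_j^n\equiv 1$ modulo a high power of the prime-to-$b_j$ part of the resultant factors''). But the argument stops exactly where the theorem becomes hard. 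At a bad prime $p$, solvability over $\Z_p$ is \emph{not} implied by the pairwise compatibilities you arrange. For the points with $p\nmid b_i$, write $x_i=a_i/b_i\in\Z_p$ and $z_i:=b_i^{-n}-x_i^n$; the achievable value vectors $\left(g(x_i)\right)_i$, over integer $g$ of degree less than $n$, form a $\Z_p$-sublattice of $\Z_p^k$ whose membership criterion is the integrality of \emph{all divided differences} of the data, not just the first-order ones. Concretely, for three points with $x_1\equiv x_2\equiv x_3 \pmod p$ but pairwise distinct modulo $p^2$, one needs $[z_1,z_2]\equiv[z_2,z_3]\pmod p$ in addition to $[z_i,z_j]\in\Z_p$, and your hypothesis $\varphi(p^{e})\mid n$ with $e=v_p(a_jb_i-a_ib_j)$ only delivers the pairwise conditions. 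The sentence asserting that ``the local system acquires a triangular, Hensel-type structure in which the surviving obstructions are eliminated one valuation level at a time'' is therefore the entire content of the theorem at the bad primes, and it is asserted rather than proved.

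This missing step is where the paper spends essentially all of its effort. It runs an induction on the points, writing $F_{r+1}(x)=F_r(x)+Q(x)\prod_{i\le r}(b_ix-a_i)$, and to get the required divisibility of $B=b_{r+1}^{n}F_r(a_{r+1}/b_{r+1})-1$ at primes $p$ dividing $E(r+1)$ but not $b_{r+1}$, it successively divides $F_r(x)-1$ by the factors $(x-a_j\overline{b}_j)$ attached to the colliding points $j$, losing a controlled power $p^{m}$ of precision at each division. To make this bookkeeping close up, the congruences on $n$ are imposed with exponent $km$ rather than your $e$: that factor of $k$ is exactly the slack needed to survive up to $k$ successive divisions, i.e.\ to kill the higher divided-difference obstructions coming from clusters of three or more points that collide modulo $p$. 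So your plan is pointed in a workable direction, but completing it is not a routine verification --- it is the paper's proof. As written, the proposal has a genuine gap at the bad primes, both because the ``level-by-level elimination'' is unproved and because the congruence depth you impose (pairwise $e$) is too shallow to handle clusters, so the claimed Hensel-type elimination would not in fact go through without strengthening the choice of $n$.
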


\begin{cor} \label{cor2.1}
If $E=\left\{\frac{a_1}{b_1},\ldots,\frac{a_{k}}{b_{k}}\right\}$
with the $a_i/b_i$ rationals written in their lowest terms and
$b_i\geq 2$, then
$$ t_M(E)=\max_{i=1,\ldots,k} \frac{1}{b_i}. $$
\end{cor}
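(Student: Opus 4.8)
The plan is to establish matching lower and upper bounds for $t_M(E)$. The lower bound is essentially the divisibility observation already recorded just before the statement: for any $P\in\M_n(\Z)$ and any point $a_i/b_i\in E$, the rational $b_i^n P(a_i/b_i)$ is in fact an integer, and reducing modulo $b_i$ kills every term except the leading one, leaving the residue $a_i^n \pmod{b_i}$. Since $(a_i,b_i)=1$ and $b_i\geq 2$, this residue is nonzero, so $|b_i^n P(a_i/b_i)|\geq 1$, i.e. $|P(a_i/b_i)|\geq b_i^{-n}$. Taking the maximum over $i$ gives $\norm{M_n}_E=\max_i |M_n(a_i/b_i)|\geq \max_i b_i^{-n}=(\max_i 1/b_i)^n$, and taking $n$-th roots yields $t_M(E)\geq \max_i 1/b_i$.

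For the upper bound I would invoke Theorem \ref{thm2.1} directly. It furnishes, for some positive integer $n$, a monic integer polynomial $f$ of degree $n$ with $f(a_i/b_i)=b_i^{-n}$ for every $i$. Hence $\norm{f}_E=\max_i |f(a_i/b_i)|=\max_i b_i^{-n}=(\max_i 1/b_i)^n$. Since $M_n$ minimizes the sup norm over $\M_n(\Z)$, we have $\norm{M_n}_E\leq \norm{f}_E$, and using the infimum representation in (\ref{1.7}),
\[ t_M(E)=\inf_m \norm{M_m}_E^{1/m}\leq \norm{M_n}_E^{1/n}\leq \max_i \frac{1}{b_i}. \]
Combining the two bounds gives the asserted equality.

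The only genuine content lies in the upper bound, which rests entirely on Theorem \ref{thm2.1}; granting that interpolation result, the corollary is immediate, so I do not expect any serious obstacle at this level. The one point worth flagging is why a polynomial of a single degree $n$ suffices: because (\ref{1.7}) expresses $t_M(E)$ as the infimum of $\norm{M_m}_E^{1/m}$ over all $m$ (not merely a limit), a single admissible test polynomial of any fixed degree already bounds the constant from above, so there is no need to produce such an $f$ for every degree. All the weight of the argument therefore sits in Theorem \ref{thm2.1}.
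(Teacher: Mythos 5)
Your proof is correct and matches the paper's own (implicit) argument: the lower bound is exactly the divisibility remark preceding Theorem \ref{thm2.1}, and the upper bound combines the interpolating polynomial from Theorem \ref{thm2.1} with the infimum representation of $t_M(E)$ from Lemma \ref{new}, which is precisely why the paper calls the corollary an immediate consequence. Your flagged point---that a single degree suffices because (\ref{1.7}) is an infimum, not just a limit---is the right subtlety to identify and is handled the same way in the paper.
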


\noindent {\bf Two consecutive Farey fractions: } It is perhaps
worth noting that in the case of two   consecutive Farey fractions
$$ \frac{a_2}{b_2}<\frac{a_1}{b_1}, \;\;\; a_1b_2-a_2b_1=1, $$
it is easy to explicitly write down a polynomial satisfying
Theorem 2.1 (or any congruence feasible values):

If $n\geq 2$ with
$$ a_i^n\equiv A_i \mod b_i,\;\;\; i=1,2, $$
then
$$ f(x)=x^n+ \left(\frac{A_1-a_1^n}{b_1}\right) \left(b_2x-a_2\right)^{n-1}
+\left(\frac{A_2-a_2^n}{b_2}\right) \left(a_1-b_1x\right)^{n-1} $$
has $f(a_i/b_i)=A_i/b_i^n$, $i=1,2$.

Moreover if $a_3/b_3$ is the next Farey fraction between them,
$a_3=a_1+a_2$, $b_3=b_1+b_2$, and $n\geq 3$ with $a_i^n\equiv A_i
\mod b_i$, $i=1,2,3$, the polynomial
$$ \tilde{f}(x)=f(x) + \left( \frac{A_3-a_3^n}{b_3}- \frac{A_2-a_2^n}{b_2}-
\frac{A_1-a_1^n}{b_1}\right) (b_2x-a_2)^j(a_1-b_1 x)^{n-1-j}, $$
satisfies $\tilde{f}(a_i/b_i)=A_i/b_i^n$, $i=1,2,3$, for any
$1\leq j\leq n-2.$

For higher degree algebraic numbers which are not algebraic
integers (adding an algebraic integer can plainly not change the
monic integer Chebyshev constant), the presence of a full set of
conjugates similarly leads to a lower bound. In particular if $E$
contains all the roots $\alpha_1,\ldots,\alpha_d$ of an
irreducible integer polynomial of degree $d$ and lead coefficient
$b\geq 2$ then
$$ t_{M}(E)\geq \frac{1}{b^{\frac{1}{d}}}, $$
(since for any monic integer polynomial $P$ of degree $n$ the
quantity $b^n\prod_{i=1}^d P(\alpha_i)$ is an integer and
necessarily non-zero). Proposition \ref{prop1.4} and Corollary
\ref{cor2.1} can be used to furnish non-rational cases where such
a bound is sharp. However, if $E$ consists of a set of conjugates
missing at least one real or pair of complex conjugates, then in
fact  $t_M(E)=0$. Similarly if $E$ consists of a finite number of
transcendentals. These (and other similar examples) follow at once
from the following result:

\begin{thm} \label{thm2.2}
Suppose that $S=\{\alpha_1,\ldots,\alpha_k\}$ is a set of $k$
numbers, with the $\alpha_i$ transcendental or algebraic of degree
more than $k$. If $S$ is closed under complex conjugation, then
for any $\ve \in (0,1)$ there is a monic integer polynomial $F$ of
degree $n=n(\alpha_1,\ldots,\alpha_k)$ with $|F(\alpha_i)|<\ve$,
$i=1,\ldots,k$.
\end{thm}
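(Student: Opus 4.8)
The plan is to recast the problem as a density question for a subgroup of $\R^k$ and to enforce monicity by a device that never cancels the leading term. Using $\R$-symmetry, write $S$ as $r$ real points and $s$ conjugate pairs with $r+2s=k$, and define the evaluation map $\Psi:\Z[x]\to\R^k$ sending $P$ to its values at the real points together with the real and imaginary parts of its values at one member of each conjugate pair. Since $\overline{F(\gamma)}=F(\bar\gamma)$ for a real-coefficient $F$, a bound on $\norm{\Psi(F)}$ controls $\abs{F(\alpha_i)}$ for every $i$, so the goal becomes: produce a monic $F\in\Z[x]$ with $\Psi(F)$ arbitrarily close to $0$. Distinctness of the points makes $\Psi$ restricted to $\Z[x]_{\le k-1}$ a full-rank lattice $\Lambda_0$ in $\R^k$ (a generalized Vandermonde), a fact I will use to correct lower-order terms.

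The engine is a density statement: the additive group $G:=\Psi(\Z[x])$ is dense in $\R^k$. By Kronecker's criterion this fails only if some nonzero $t\in\R^k$ satisfies $\langle t,\Psi(x^j)\rangle\in\Z$ for all $j\ge0$; writing $t$ in complexified coordinates as a conjugate-symmetric tuple $(c_1,\dots,c_k)$ gives $R_j:=\sum_i c_i\alpha_i^j\in\Z$ for all $j\ge0$ with $(c_i)\neq0$. I would then form the generating function $\sum_{j\ge0}R_jz^j=\sum_i c_i/(1-\alpha_i z)$, a rational function all of whose Taylor coefficients are integers, so by Fatou's lemma it equals $A(z)/B(z)$ with $A,B\in\Z[x]$ and $B(0)=1$. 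Its poles are the $1/\alpha_i$ with $c_i\neq0$, which are therefore roots of the single integer polynomial $B$ of degree $\le k$; hence each such $\alpha_i$ is algebraic of degree $\le k$, contradicting the hypothesis that every $\alpha_i$ is transcendental or of degree exceeding $k$. Thus $(c_i)=0$ and $G$ is dense.

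To convert density into a \emph{monic} polynomial one must avoid the standard pigeonhole (two integer polynomials of equal degree whose values nearly agree), since their difference loses the leading term. Instead I would compare distinct powers: in the compact torus $\mathbb{T}=\R^k/\Lambda_0$ the sequence $\theta_m:=\Psi(x^m)\bmod\Lambda_0$ has two entries $\theta_a,\theta_b$ with $k\le a<b$ lying within any prescribed distance, whereupon $F(x)=x^b-x^a+L(x)$, with $L\in\Z[x]_{\le k-1}$ chosen through $\Lambda_0$ to absorb the nearby lattice vector, is monic of degree $b$ with $\norm{\Psi(F)}<\ve$. This already yields, for each $\ve$, a monic $F$ with $\abs{F(\alpha_i)}<\ve$. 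To reach the sharper conclusion that the infimum over monic polynomials of one fixed degree $n$ is $0$ (the form that forces $t_M(S)=0$), I would instead show that $\Psi(\Z[x]_{\le n-1})$ is itself dense for a single fixed large $n$, so that $-\Psi(x^n)$ is approximable within $\ve$ by $\Psi(L)$ with $\deg L<n$ and $F=x^n+L$ works.

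The main obstacle is exactly this last passage from density of the full group $G$ to density of the finite-degree piece $\Psi(\Z[x]_{\le n-1})$ at one fixed $n$: a decreasing chain of subgroups can have trivial intersection without any member being trivial, so the Fatou argument, which controls all $j$ simultaneously, does not by itself localize to a fixed degree. I expect to overcome this by using the hypothesis in the sharper form $\prod_i(x-\alpha_i)\notin\Q[x]$, a consequence of degree $>k$, to show that the integer relations $R_0,\dots,R_{n-1}\in\Z$ cannot persist nontrivially once $n$ is large enough; this localization step is where most of the work will lie.
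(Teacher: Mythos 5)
Your proposal splits into two parts: a complete argument (pigeonhole on the powers $\theta_m=\Psi(x^m)$ in $\R^k/\Lambda_0$, giving $F=x^b-x^a+L$), and an admitted unfinished step. The gap is real, and it is important to see that the step you defer is not a refinement but the entire content of the theorem. In the statement, the degree $n=n(\alpha_1,\ldots,\alpha_k)$ depends only on the points, \emph{not} on $\ve$: one fixed degree must serve every $\ve$. This is exactly what the paper needs for the claims made just before the theorem, since by Lemma~\ref{new} and \eqref{1.7}, having $\inf\{\norm{P}_S : P\in\M_n(\Z)\}=0$ for a \emph{single} $n$ is what forces $t_M(S)=0$. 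Your completed construction produces a degree $b=b(\ve)$ that in general grows without bound as $\ve\to 0$ (the pigeonhole needs on the order of $\ve^{-k}$ powers before two fall within $\ve$), and then $\ve^{1/b(\ve)}\not\to 0$, so no conclusion about $t_M(S)$ follows. A telling symptom: your pigeonhole step never uses the hypothesis that the $\alpha_i$ are transcendental or of degree $>k$. It applies verbatim to $S=\{1/2\}$ (there $\Lambda_0=\Z$, $\theta_m=2^{-m}$), a set for which the fixed-degree statement is \emph{false}, since any monic integer $F$ of degree $n$ has $\abs{F(1/2)}\ge 2^{-n}$. So what you have proved is a strictly weaker, essentially hypothesis-free statement.

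Your plan for the missing localization is also flawed as stated. You propose to deduce, for some fixed large $n$, the triviality of the group of $t\in\R^k$ with $\langle t,\Psi(x^j)\rangle\in\Z$ for all $j<n$, using only $\prod_i(x-\alpha_i)\notin\Q[x]$. That condition is a weaker consequence of the hypothesis, and it cannot suffice: the set $S=\{1/2,\sqrt{2}\}$ satisfies it, yet $t=(2^{n-1},0)$ obstructs density of $\Psi(\Z[x]_{\le n-1})$ for \emph{every} $n$ (and indeed the fixed-degree conclusion fails for this $S$), even though the full group $\Psi(\Z[x])$ is dense. So the decreasing chain of obstruction lattices really can have trivial intersection with every member nonzero in precisely this setting, and any correct localization must invoke the full strength of ``transcendental or degree $>k$'' --- for which you have no argument. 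The paper sidesteps the issue by a bounded-degree construction: Minkowski's theorem on linear forms gives a nonzero $Q\in{\mathcal P}_k(\Z)$ with $0<\abs{Q(\alpha_i)}<\ve/k$ (degree $>k$ is what guarantees $Q(\alpha_i)\neq 0$); setting $P=x^jQ$ with $j\le k(k-1)/2$ makes the values $P(\alpha_i)$ distinct except for forced root-of-unity coincidences; then $F=x^n+\sum_{j=1}^m b_jP^j$, where the $b_j$ are integer parts of the (real, by conjugate symmetry) solution of the Vandermonde system $\sum_j A_jP(\alpha_i)^j=-\alpha_i^n$, gives $\abs{F(\alpha_i)}\le\sum_j(\ve/k)^j<\ve$, while $\deg\bigl(\sum_j b_jP^j\bigr)\le k^2(k+1)/2<n$, so $n$ is chosen subject only to conditions on the points themselves. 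To salvage your approach you would need a genuine finite (fixed-$n$) version of your Kronecker--Fatou argument using the full hypothesis; otherwise a construction of the paper's type is required.
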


\section{Proofs for section 1}

\begin{lem} \label{new}
The limit defining $t_M(E)$ in \eqref{1.7} by
\[t_M(E) := \lim_{n \rightarrow \infty} \| M_n \|_{E}^{1/n} \]
exists. Furthermore,
\[ \lim_{n \rightarrow \infty} \| M_n \|_{E}^{1/n} =
\inf_n \| M_n \|_{E}^{1/n}.\]
\end{lem}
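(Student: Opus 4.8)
The plan is to prove that the sequence $\|M_n\|_E^{1/n}$ converges to its infimum by establishing a submultiplicativity property for the quantities $\|M_n\|_E$ and then invoking Fekete's subadditive lemma. Write $m_n := \|M_n\|_E$ for the minimal monic norm in degree $n$. The key observation is that the product of a monic integer polynomial of degree $p$ with one of degree $q$ is again monic with integer coefficients of degree $p+q$, so that $M_p M_q \in \M_{p+q}(\Z)$ is an admissible competitor for the degree-$(p+q)$ problem. Since $\|fg\|_E \le \|f\|_E \|g\|_E$ for the sup norm, taking $f = M_p$ and $g = M_q$ and using that $M_{p+q}$ is optimal yields
\[ m_{p+q} = \|M_{p+q}\|_E \le \|M_p M_q\|_E \le \|M_p\|_E\,\|M_q\|_E = m_p\, m_q. \]
Thus $(m_n)$ is submultiplicative.

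With submultiplicativity in hand, I would pass to logarithms. Set $c_n := \log m_n$, allowing the value $-\infty$ if some $m_n = 0$; then the inequality above reads $c_{p+q} \le c_p + c_q$, i.e. $(c_n)$ is subadditive. Fekete's lemma then gives that $\lim_{n\to\infty} c_n/n$ exists in $[-\infty,\infty)$ and equals $\inf_n c_n/n$. Exponentiating, $\lim_{n\to\infty} m_n^{1/n}$ exists and equals $\inf_n m_n^{1/n}$, which is exactly the claim. One should note that $t_M(E) \ge t_\Z(E) \ge 0$ by \eqref{1.8}, so the limit is a genuine nonnegative number (or the degenerate value $0$), and no difficulty arises at the lower end.

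The only point requiring mild care is the standard $\eps$-argument underlying Fekete's lemma, which I would either cite directly or reproduce briefly: fix $p$ and for arbitrary $n$ write $n = qp + r$ with $0 \le r < p$, so that submultiplicativity gives $m_n \le m_p^{\,q}\, m_r$, whence $m_n^{1/n} \le m_p^{\,q/n}\, m_r^{1/n}$; letting $n\to\infty$ (so $q/n \to 1/p$ and the bounded factor $m_r^{1/n} \to 1$) yields $\limsup_n m_n^{1/n} \le m_p^{1/p}$, and taking the infimum over $p$ together with the trivial bound $\liminf_n m_n^{1/n} \ge \inf_p m_p^{1/p}$ closes the argument. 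I expect no genuine obstacle here: the entire content is the submultiplicativity of $m_n$, which is immediate once one recognizes that products of monic integer polynomials remain monic and integral. The remaining steps are the routine invocation of the subadditive lemma.
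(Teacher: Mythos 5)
Your proof is correct and takes essentially the same route as the paper: submultiplicativity $\|M_{p+q}\|_E \le \|M_p\|_E\,\|M_q\|_E$, obtained because products of monic integer polynomials are again monic with integer coefficients, followed by passing to logarithms and applying the subadditive (Fekete) lemma. The only cosmetic difference is that the paper cites a version of the lemma giving existence of the limit alone and then verifies the equality with $\inf_n \|M_n\|_E^{1/n}$ by a separate argument using the powers $(M_k)^m$, whereas you invoke the full Fekete statement (limit equals infimum) directly and sketch its standard proof.
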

\begin{proof}
The argument is identical to the classical Chebyshev constant
case. Indeed, let
\[ v_n :=\norm{M_n}_E = \inf_{P_n \in{\mathcal M}_n ( {\Z})}
\| P_n \|_{E},\quad n \in\N. \] Then
\[ v_{k+m} \le \norm{ M_k M_m}_E \le \norm{M_k}_E
\norm{M_m}_E = v_{k} v_{m}.\] On setting $a_n = \log{v_n}$, we
obtain that
\[a_{k+m} \le a_k + a_m,\quad k,m \in\N.\]
Hence
\[\lim_{n\to\infty} \frac{a_n}{n} = \lim_{n\to\infty} \log \left(
v_n \right)^{1/n}\] exists (possibly as $-\infty$) by Lemma on
page 73 of \cite{Ts75}.

If $\inf_n \| M_n \|_{E}^{1/n}=\liminf_{n\to\infty} \| M_n
\|_{E}^{1/n}$ then the second statement of this lemma follows from
the above. Otherwise, we have
\[\inf_n \| M_n \|_{E}^{1/n} = \| M_k \|_{E}^{1/k} \]
for a particular $k\in\N.$ But then the sequence of polynomials
$\{(M_k)^m\}_{m=0}^{\infty}$ satisfies
\[ \| M_k \|_{E}^{1/k} = \lim_{m \rightarrow \infty}
\| (M_k)^m \|_{E}^{1/(km)} \ge t_M(E) \ge \inf_n \| M_n
\|_{E}^{1/n} = \| M_k \|_{E}^{1/k}.\]
\end{proof}

\begin{proof}[Proof of Theorem \ref{thm1.1}]
Let $T_n(z)=z^n+a_{n-1}^{(n)} z^{n-1}+a_{n-2}^{(n)}
z^{n-2}+\ldots+a_0^{(n)}, \ n\in\N,$ be the Chebyshev polynomials
for $E$. Since $E$ is $\R$-symmetric, the coefficients of
Chebyshev polynomials are real (cf. \cite[p. 72]{Ts75}). By the
definition of (\ref{1.3}), for any $\eps>0$ there exists $N\in\N$
such that
$$ \|T_n\|_E^{1/n} \le \textup{cap}(E) + \eps, \quad n\ge N.$$
We shall construct a sequence of monic polynomials with integer
coefficients and small norms from the Chebyshev polynomials on
$E$. This is done by the following inductive procedure. Consider
$n\ge N$ and the polynomial $T_n-\left(a_{n-1}^{(n)}-
[a_{n-1}^{(n)}]\right)T_{n-1}$, with the two highest coefficients
integers. We have that
$$\|T_n-\left(a_{n-1}^{(n)}-[a_{n-1}^{(n)}]\right) T_{n-1}\|_E
\le (\textup{cap}(E)+\eps)^n + (\textup{cap}(E)+\eps)^{n-1}.$$
Continuing in the same fashion, we eliminate the fractional parts
of all coefficients from the $n$-th to $(N+1)$-st, and obtain the
following estimate
\begin{equation} \label{2.1}
\left\|z^n+[a_{n-1}^{(n)}]z^{n-1}+\ldots+[\cdot] z^{N+1} +
\sum_{k=0}^N b_k z^k \right\|_E \le \sum_{l=N+1}^n
(\textup{cap}(E)+\eps)^l.
\end{equation}
Note that
$$\left\|\sum_{k=0}^N (b_k-[b_k]) z^k \right\|_E \le \sum_{k=0}^N
\|z^k\|_E =: A(N),$$ where $A(N)>0$ depends only on $N$ and the
set $E$. Hence we have from (\ref{2.1}) that
\begin{align*}
\left\|z^n+[a_{n-1}^{(n)}]z^{n-1}+\ldots+[\cdot] z^{N+1} +
\sum_{k=0}^N [b_k] z^k \right\|_E & \\ \le
(\textup{cap}(E)+\eps)^{N+1} &
\frac{(\textup{cap}(E)+\eps)^{n-N}-1}{\textup{cap}(E)+\eps-1} +
A(N),
\end{align*}
because $\textup{cap}(E)\ge 1.$ Denote the constructed polynomial
by $Q_n\in\M(\Z),\ n\in\N.$ It follows that
$$\limsup_{n\to\infty} \|Q_n\|_E^{1/n} \le \textup{cap}(E)+\eps$$
and
$$t_M(E) \le \textup{cap}(E)+\eps.$$
Letting $\eps\to 0$ and recalling that $t_M(E) \ge
t_{\C}(E)=\textup{cap}(E)$ by definition, we finish the proof.
\end{proof}

\begin{proof}[Proof of Proposition \ref{prop1.2}]
See Kakeya's proof in \cite{Ok24} or \cite{Tr71}.
\end{proof}

\begin{proof}[Proof of Proposition \ref{prop1.3}]
This proposition readily follows from the inequality
$$\|p_n\|_E \le \|p_n\|_F,$$
valid for any polynomial $p_n(z).$
\end{proof}

\begin{proof}[Proof of Proposition \ref{prop1.4}]
The following argument is due to Fekete \cite{Fe30}. Let $M_k(z),\
k\in\N,$ be monic integer Chebyshev polynomials for $E$, and let
$M_k^*(z),\ k\in\N,$ be monic integer Chebyshev polynomials for
$E^*:=P_n^{-1}(E).$ It follows from the definition that
$$\|M_{kn}^*\|_{E^*} \le \|M_{k}\circ P_n\|_{E^*} \le
\|M_{k}\|_{E},\quad k\in\N.$$ Hence
$$ t_M(E^*) \le (t_M(E))^{1/n}.$$ To prove the opposite
inequality, we consider the roots $z_i,\ i=1,\ldots,n,$ of the
equation $P_n(z)-w=0$, where $w\in E$ is fixed. If $z_j^*,\
j=1,\ldots,k,$ are the roots of $M_k^*(z)$, then we have that
$$ \left|\prod_{i=1}^n M_k^*(z_i)\right| =
\left|\prod_{i=1}^n \prod_{j=1}^k (z_i-z_j^*)\right| =
\left|\prod_{j=1}^k \prod_{i=1}^n (z_j^*-z_i)\right| =
\left|\prod_{j=1}^k (P_n(z_j^*)-w)\right|. $$ Note that
$Q_k(w):=\prod_{j=1}^k (w-P_n(z_j^*))$ is a monic polynomial in
$w$, with integer coefficients. Indeed, its coefficients are
symmetric functions in $z_j^*$'s, which are integers by the
fundamental theorem on symmetric forms. Thus we obtain that
$$\|M_k\|_{E} \le \|Q_k\|_E \le (\|M_k^*\|_{E^*})^n,\quad k\in\N,$$
and
$$t_M(E) \le (t_M(E^*))^n.$$
\end{proof}

\begin{proof}[Proof of Theorem \ref{thm1.6}]
We first prove (\ref{1.12}). The sequence of polynomials
$\{z^k\}_{k=0}^{\infty}$ shows that $t_M([0,1/n])\le 1/n$ and
$t_M([-1/n,1/n])\le 1/n$. On the other hand, we have that
$t_M([0,1/n])\ge t_M(\{1/n\})=1/n$ by Proposition \ref{prop1.3}
and  (\ref{1.11}). The remaining equality for $t_M([1-1/n,1])$
follows from Proposition \ref{prop1.4}, by using the change of
variable $z\to 1-z.$

Applying the substitution $z\to z^2,$ we obtain (\ref{1.13}) from
Proposition \ref{prop1.4} and (\ref{1.12}).

Note that $t_M([0,1/2])=1/2$ follows from (\ref{1.12}). We can now
map $[0,1/2]$ to $[n,n+1/2]$ by $z\to z+n$ (or to $[n-1/2,n]$ by
$z\to n-z$), and apply Proposition \ref{prop1.4} to prove
(\ref{1.14}). Similarly, (\ref{1.15}) is obtained from Proposition
\ref{prop1.4} and (\ref{1.12}) by the transformations $z\to z-n$
mapping $[n,n+1] \to [0,1]$, and $z\to z(1-z)$ mapping $[0,1]\to
[0,1/4].$ The same argument applies to (\ref{1.16}), where we
first map $[n,n+2]\to [-1,1]$ with $z\to z-n-1$ and then map
$[-1,1]\to [0,1]$ with $z\to z^2.$

Observe that
$$|z(1-z)|\le 1/4,\quad z\in \left[\frac{1-\sqrt{2}}{2},
\frac{1+\sqrt{2}}{2}\right].$$ Hence
$$t_M\left(\left[\frac{1-\sqrt{2}}{2},\frac{1+\sqrt{2}}{2}\right]\right)
\le \frac{1}{2}.$$ For ${1/2} \in E \subset
[(1-\sqrt{2})/2,(1+\sqrt{2})/2]$, Proposition \ref{prop1.3} and
(\ref{1.11}) give that
$$\frac{1}{2}=t_M\left(\left\{\frac{1}{2}\right\}\right) \le
t_M\left(E\right) \le
t_M\left(\left[\frac{1-\sqrt{2}}{2},\frac{1+\sqrt{2}}{2}\right]\right)
\le \frac{1}{2}.$$ It is clear that the segment
$\left[\frac{1-\sqrt{2}}{2}, \frac{1+\sqrt{2}}{2}\right]$ can be
replaced here by the lemniscate $\{z\in\C:|z(1-z)|\le 1/4\}.$
\end{proof}

\section{Proofs for section 2}

\begin{proof}[Proof of Theorem \ref{thm2.1}]
Set
\begin{eqnarray*}
 E(j) & := & \prod_{i<j}  (a_jb_i-a_ib_j), \;\;\; j=2,\ldots,k, \\
D & := & lcm [E(2),\ldots,E(k)],
\end{eqnarray*}
and write $D:=D_1(j)D_2(j)$, $E(j):=E_1(j)E_2(j)$ where
\begin{align*} D_1(j) & =\prod_{\substack{ p^{\alpha}||D \\ p|b_j}}
p^{\alpha},\;\;\; &   D_2(j) & =\prod_{\substack{ p^{\alpha}||D \\
p\nmid b_j}} p^{\alpha}, \\
E_1(j) & =\prod_{\substack{ p^{\alpha}||E(j) \\ p|b_j}}
p^{\alpha},\;\;\;& E_2(j)& =\prod_{\substack{ p^{\alpha}||E(j) \\
p\nmid b_j}} p^{\alpha}.
\end{align*}
Take $m$ to be a positive integer large enough that
$$  p^{\alpha}|D \Rightarrow {\alpha}<m $$
and choose $n\geq km$ such that for $j=1,\ldots,k$
\begin{eqnarray} a_j^n & \equiv &  1 \mod b_j^{km},  \\
     b_j^n & \equiv &  1 \mod D_2(j)^{km}.  \end{eqnarray}
Choose integers $l_i$ such that
$$ a_il_i \equiv 1 \mod b_i,  $$
and write $a_il_i-b_if_i=1$.

The proof proceeds by induction on the number of rationals $1\leq
r\leq k$, constructing a polynomial
$$ F_r(x)=x^n+\sum_{i=0}^{n-(k+1-r)m} \beta_{i,r} x^i $$
with  $F_r(a_j/b_j)=1/b_j^n$, $j=1,\ldots,r$.

The first step, $r=1$, is easy;
$$ F_1(x):=x^n+\left(\frac{1-a_1^n}{b_1^{km}}\right)(l_1x-f_1)^{n-km}. $$

Next, given $F_r(x)$ with $r<k$ we construct $F_{r+1}(x)$. This
amounts to finding an integer polynomial $Q(x)$, of degree at most
$n-(k-r)m-r$ such that
$$ F_{r+1}(x)=F_r(x)+Q(x)\prod_{i=1}^r (b_ix-a_i) $$
has $F_{r+1}(a_{r+1}/b_{r+1})=1/b_{r+1}^n.$ Thus it is enough if
$$ \frac{1}{b_{r+1}^n}=F_r\left(\frac{a_{r+1}}{b_{r+1}}\right)+\frac{E(r+1)}
{b_{r+1}^r} \frac{A}{b_{r+1}^{n-(k-r)m-r}}, $$ for some integer
$A$  since we can then take
$$Q(x)=A\left(l_{r+1}x-f_{r+1}\right)^{n-(k-r)m-r}. $$
For this we require that $b_{r+1}^{(k-r)m } E(r+1)$ divides
$$ B:=b_{r+1}^{n}F_r\left(\frac{a_{r+1}}{b_{r+1}}\right)-1 =
(a_{r+1}^n-1)+\sum_{j=0}^{n-(k+1-r)m}
\beta_{j,r}a_{r+1}^jb_{r+1}^{n-j}. $$ Clearly from
$E_1(r+1)|D_1(r+1)$ and the definition of $m$ we have
$E_1(r+1)|b_{r+1}^m$, and
$b_{r+1}^{(k-r)m}E_1(r+1)|b_{r+1}^{(k+1-r)m}$. So from (4.1) and
$r\geq 1$  we certainly have that $b_{r+1}^{(k-r)m } E_1(r+1)$
divides $(a_{r+1}^n-1)$, and $b_{r+1}^{n-j}$ for $j\leq
n-(k+1-r)m$, and hence  $B$. Thus it remains to check that
$E_2(r+1)$ divides $B$.

Suppose that $p^{\alpha}||E_2(r+1)$. Then
$p|(b_ja_{r+1}-a_{j}b_{r+1})$ for some non-empty subset, $S$ say,
of the $1\leq j\leq r$. Note that since $p\nmid b_{r+1}$ we have
$p\nmid b_j$ and $ b_j^n \equiv 1 \mod p^{mk}$ for all $j\in S\cup
\{r+1\}$ from (4.2). Hence,  choosing integers $\overline{b}_{j}$
with $b_j\overline{b}_{j}\equiv 1 \mod p^{mk}$ for  $j\in
S\cup\{r+1\}$, we have
$$ 0 = b_j^{n}F_r(a_j/b_j)-1 \equiv F_r(a_j\overline{b}_{j})-1 \mod p^{mk}, $$
for the $j\in S$,  with $B\equiv F_r(a_{r+1}\overline{b}_{r+1})-1
\mod p^{mk}$ and $p^{\alpha}||\prod_{j\in
S}(a_{r+1}\overline{b}_{r+1}-a_{j}\overline{b}_{j})$.
 Thus we can  successively divide
$F_r(x)-1$ by $(x-a_j\overline{b}_j)$ for the $j\in S$ (assume we
proceeed in order of increasing $j$). In particular  after dealing
with a subset $S'$ of the $j$ in $S$ we can write
$$ F_r(x)-1 \equiv T_{S'}(x) \prod_{j\in S'} (x-a_j\overline{b}_j) \mod p^{(k+1-|S'|)m}$$
for some integer polynomial $T_{S'}(x)$, where $p^m \nmid
\prod_{j\in S'}(a_{i}\overline{b}_{i}-a_{j}\overline{b}_{j})$ (as
$p^m \nmid E(i)$) and $F_r(a_i\overline{b}_{i}) -1 \equiv 0 \mod
p^{km}$ imply that
$$T_{S'}(a_i\overline{b}_i)\equiv 0 \mod p^{(k+1-|S'|-1)m}$$
for any remaining $i\in S\setminus S'$. So
\begin{align*} B & \equiv T_S(a_{r+1}\overline{b}_{r+1}) \prod_{j\in S}(a_{r+1}\overline{b}_{r+1}-a_{j}\overline{b}_{j}) \mod p^{(k+1-|S|)m} \\
  &  \equiv 0 \mod p^{\alpha},
\end{align*}
as claimed.
\end{proof}

\begin{proof}[Proof of Theorem \ref{thm2.2}]
Suppose that we have a set of $k$ numbers as in the statement of
Theorem \ref{thm2.2}.

We first show that for any $1>\ve >0$ there is a non-zero integer
polynomial, $P(x)=x^j Q(x)$ with $j\le\binom{k}{2}$ and $Q$ of
degree at most $k$, with $0<|P(\alpha_i)|<\ve/k$, $i=1,\ldots,k,$
and $P(\alpha_i)\neq P(\alpha_l)$ when $\alpha_i/\alpha_l$ is not
a root of unity. This essentially follows from Minkowski's theorem
on linear forms: Taking an arbitrary real $\alpha_{k+1}\neq
\alpha_i$, $i=1,\ldots,k$ we can find a non zero
$(a_0,\ldots,a_k)\in \Z^{k+1}$ with
$$ |a_0+a_1\alpha_i+\cdots + a_k\alpha_i^{k}| \leq
\frac{\ve}{k\max\{1,|\alpha_i|\}^{\frac{1}{2}k(k-1)}}, $$ if
$\alpha_i$, $i=1,\ldots,k$, is real, and for any pairs of complex
conjugate $\alpha_i$
$$ |a_0+a_1\Re \alpha_i+\cdots + a_k\Re\alpha_i^{k}|
\leq \frac{\ve}{\sqrt{2}k\max\{1,|\alpha_i|\}^{\frac{1}{2}k(k-1)}}, $$
$$ |a_0+a_1\Im \alpha_i+\cdots + a_k\Im \alpha_i^{k}| \leq
\frac{\ve}{\sqrt{2}k\max\{1,|\alpha_i|\}^{\frac{1}{2}k(k-1)}}, $$
and
$$ |a_0+a_1\alpha_{k+1}+\cdots + a_k\alpha_{k+1}^{k}| \leq
\frac{Dk^k\prod_{i=1}^k\max\{1,|\alpha_i|\}^{\frac{1}{2}k(k-1)}
}{\ve^k}, $$ where
$$ D=\left|\det \begin{pmatrix} 1 & \cdots & \alpha_1^k \\
\vdots & & \vdots \\ 1 & \cdots & \alpha_{k+1}^k
 \end{pmatrix} \right|=\prod_{i<j} |\alpha_i-\alpha_j| \neq 0. $$
Taking $Q(x)=a_0+\cdots +a_kx^k$ we plainly have $Q(\alpha_i)\neq
0$ (since $[\Q(\alpha_i):\Q]>k$) and $\alpha_i^jQ(\alpha_i)\neq
\alpha_l^jQ(\alpha_l)$, when $\alpha_i/\alpha_l$ is not a root of
unity, for at least one $0\leq j\leq k(k-1)/2$ (since when
$\alpha_i/\alpha_l$ is not a root of unity
$Q(\alpha_l)/Q(\alpha_i)=(\alpha_i/\alpha_l)^j$ for at most one
integer $j$, and there are at most $k(k-1)/2$ such pairings
$i<l$). Choosing $P(x)=x^jQ(x)$ for such a  $j$ then has the
desired property.

To complete the proof of Theorem \ref{thm2.2} take the polynomial
$P(x)$ as above, and an $n>k^2(k+1)/2$ such that
$\alpha_i^n=\alpha_l^n$ whenever $\alpha_i/\alpha_l$ is a root of
unity, and solve the linear system
$$ A_1P(\alpha_i)+A_2 P(\alpha_i)^2+ \cdots + A_mP(\alpha_i)^m =
-\alpha_i^n,\;\;\; i=1,\ldots,m,$$ where
$P(\alpha_1),\ldots,P(\alpha_m)$ are the distinct values of
$P(\alpha_i)$ (any remaining $\alpha_l$ with
$P(\alpha_l)=P(\alpha_i),\ \alpha_l^n=\alpha_i^n$ will merely
repeat one of these equations). This will have a solution, since
$$ \left|\det \begin{pmatrix} P(\alpha_1) & \cdots & P(\alpha_1)^{m} \\
\vdots  & & \vdots  \\ P(\alpha_m) & \cdots & P(\alpha_m)^{m}
\end{pmatrix} \right| =|P(\alpha_1)|\cdots |P(\alpha_m)|\prod_{i<j}
|P(\alpha_i)-P(\alpha_j)|\neq 0. $$ Moreover, since the complex
$\alpha_i$ come in complex conjugate pairs, the solution
$A_1$,\ldots,$A_m$ will be real. Hence taking $b_j=[A_j]$,
$j=1,\ldots,m,$ gives a monic integer polynomial
$$ F(x)=x^n+b_mP(x)^m+b_{m-1}P(x)^{m-1}+\cdots +b_1P(x) $$
with
$$ |F(\alpha_i)|=\left|\sum_{j=1}^m \{A_j\}P(\alpha_i)^j\right|\leq
\sum_{j=1}^m (\ve/k)^j <\ve. $$

\end{proof}

\section{Intervals of Consecutive Farey Numbers}

\begin{con} \label{con}
Suppose $[{a_2}/{b_2},{a_1}/{b_1}]$ is an interval whose endpoints
are consecutive Farey fractions. This is characterized by
$a_1b_2-a_2b_1=1.$ Then  $$t_M[{a_2}/{b_2},{a_1}/{b_1}] =
\max(1/b_1,1/b_2).$$
\end{con}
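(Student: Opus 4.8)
\medskip
\noindent\emph{Plan of proof.}

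Write $b:=\min(b_1,b_2)$, so the asserted value is $1/b=\max(1/b_1,1/b_2)$, and assume $b_1,b_2\ge 2$ (if a denominator equals $1$ the corresponding endpoint is an integer, which cannot affect $t_M$, and the formula is read with that endpoint discarded). The lower bound is the remark opening Section~2: each endpoint $a_i/b_i$ lies in $I$, and for any $P\in\M_n(\Z)$ the integer $b_i^nP(a_i/b_i)$ is nonzero (a monic integer polynomial has no non-integer rational root), so $\|P\|_I\ge|P(a_i/b_i)|\ge b_i^{-n}$ and $t_M(I)\ge 1/b_i$. Taking the better endpoint gives $t_M(I)\ge\max(1/b_1,1/b_2)$.

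For the upper bound I would first normalize. Since $a_1b_2-a_2b_1=1$ makes the endpoints consecutive Farey fractions, no fraction of denominator $\le\max(b_1,b_2)$---in particular no integer---lies strictly between them; as the endpoints are themselves non-integers, $I$ contains no integer and so lies in some $(k,k+1)$. The integer affine maps $z\mapsto\pm z+m$ preserve $t_M$ (replace $M_n(z)$ by $\pm M_n(\pm z+m)$, again monic integer of the same norm), the Farey relation, and the two denominators, so I may assume $I\subset(0,1)$. By Lemma~\ref{new}, $t_M(I)=\inf_n\|M_n\|_I^{1/n}$, so it suffices to produce monic integer polynomials $g$ with $\|g\|_I^{1/\deg g}$ arbitrarily close to $1/b$: the powers $g^k\in\M_{k\deg g}(\Z)$ then force $t_M(I)\le\|g\|_I^{1/\deg g}$. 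By the lower bound any such $g$ of degree $d$ has $\|g\|_I\ge b^{-d}$, so the target is a monic integer $g$ whose norm on $I$ decays at the optimal arithmetic rate $b^{-d}$, the maximum being essentially attained at the denominator-$b$ endpoint.

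Two families of $g$ settle special cases and show the intended shape. When $b=2$ the denominator-$2$ endpoint is forced to equal $1/2$, and $g(x)=x(x-1)$ works: $|x(x-1)|\le 1/4$ on $(0,1)$ while $|g(1/2)|=1/4=2^{-2}$, so $\|g\|_I^{1/2}=1/2$. When the denominator-$b$ endpoint can be placed at $\pm1/b$ with $I\subset[-1/b,1/b]$, the monomial $g(x)=x^d$ gives $\|g\|_I=b^{-d}$; this recovers and slightly extends the interval values of Theorem~\ref{thm1.6}, e.g.\ (\ref{1.12}).

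The crux, and the reason the statement is only a conjecture, is the general construction of $g$. The obvious candidate, the explicit interpolating polynomial displayed just after Theorem~\ref{thm2.1}, attains the optimal values $b_i^{-n}$ at both endpoints, but its coefficients $(1-a_i^{\,n})/b_i$ have size $\asymp b_i^{\,n-1}$, so its norm over all of $I$ is only $O(1)$: controlling the endpoint values is not the same as controlling the sup-norm. What is really wanted is a monic integer surrogate for the Chebyshev polynomial of $I$. The genuine Chebyshev polynomial has norm $\asymp\textup{cap}(I)^d=(4b_1b_2)^{-d}$, far below the target, while the coefficient-rounding device of Theorem~\ref{thm1.1} is effective only when $\textup{cap}(I)\ge 1$, whereas here $\textup{cap}(I)=1/(4b_1b_2)$ is tiny. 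Bridging the gap between the capacity rate $1/(4b_1b_2)$ and the arithmetic rate $1/b$ by monic integer polynomials is precisely the open point. A natural line of attack is a descent along the Stern--Brocot tree, passing to the adjacent unimodular interval with endpoint $(a_1-a_2)/(b_1-b_2)$ (so that the larger denominator decreases) and transporting polynomials between neighboring Farey intervals via Proposition~\ref{prop1.4}; but the polynomial map implementing this descent is not available in general, and it is exactly here that the argument closes only in special cases.
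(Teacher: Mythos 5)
The statement you were asked to prove is, in the paper itself, only a conjecture: the authors do not prove it either, and your proposal correctly reproduces exactly the partial results they do have. Your lower bound is the paper's own (the remark opening Section~2 and Corollary~\ref{cor2.1}, via the rational-root observation that $b_i^nP(a_i/b_i)$ is a nonzero integer), and your reduction of the upper bound to exhibiting a single monic integer certificate $g$ with $\norm{g}_I=\max(1/b_1,1/b_2)^{\deg g}$, via Lemma~\ref{new} and powers of $g$, is precisely the certification scheme the paper uses: its Section~5 table of LLL-found polynomials $T(a_2/b_2,a_1/b_1)$ settles all Farey intervals with denominators below $22$ in exactly this way. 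Your hand-made certificates (monomials for intervals inside $[-1/b,1/b]$, recovering \eqref{1.12}, and $x(x-1)$ when $\min(b_1,b_2)=2$, which is the content of \eqref{1.17}) are correct instances of that scheme, and your diagnosis of why the general case resists --- the endpoint-interpolation polynomial of Section~2 controls values at $a_i/b_i$ but not the sup norm, and the coefficient-rounding of Theorem~\ref{thm1.1} needs $\textup{cap}(E)\ge 1$ while here $\textup{cap}(I)=1/(4b_1b_2)$ --- accurately locates the open gap. One incidental point in your favor: your convention of discarding integer endpoints is the right reading of the conjecture, since taken literally $\max(1/b_1,1/b_2)$ would equal $1$ on $[0/1,1/n]$, contradicting \eqref{1.12}, whereas the paper's own claim that Theorem~\ref{thm1.6} verifies the conjecture on $[0,1/n]$ shows this is what the authors intend. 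In short: nothing you wrote is wrong, but it is not a proof --- and it could not be, since the statement remains open; you have matched the paper's state of knowledge rather than exceeded it.
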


From Corollary \ref{cor2.1}  $$t_M[{a_2}/{b_2},{a_1}/{b_1}] \ge
\max(1/b_1,1/b_2)$$ and from  Theorem \ref{thm1.6} the conjecture
holds on intervals of the form $[0,1/n]$. The following table
gives enough solutions to fill in all Farey intervals with
denominator less than 22. On the remaining intervals $x$ works or
the symmetry $x\rightarrow m\pm x$ works.

The computations for the table are done with LLL. As in section 2,
for certain $n$, we can find a polynomial $p$ of degree $n$ that
satisfies $p({a_2}/{b_2}) =1/b_2^n$ and $p({a_1}/{b_1}) =1/b_1^n.$
One now constructs a basis
$$B:=\left(p(x), (b_1 x- a_1)(b_2 x -a_2), x (b_1 x- a_1)(b_2 x -a_2),
\ldots, x^{n-3} (b_1 x- a_1)(b_2 x -a_2)\right)$$ and we reduce
the basis with respect to the norm
$$\left(\int_{{a_2}/{b_2}}^{{a_1}/{b_1}} p(x)^2
\,dx\right)^{1/2}.$$ We then search the reduced basis for
solutions of the conjecture. These calculations were done in Maple
using an LLL implementation that can accommodate  reduction with
respect to any positive definite quadratic form. (This was
implemented by Kevin Hare and we would like to thank him for his
code.)

Here $T({a_2}/{b_2},{a_1}/{b_1})$ is a polynomial that satisfies
\[ \norm{T({a_2}/{b_2},{a_1}/{b_1})}_{[{a_2}/{b_2},{a_1}/{b_1}]} =
\max(1/b_1,1/b_2)^{\deg T}, \] so that Conjecture \ref{con} holds
on $[{a_2}/{b_2},{a_1}/{b_1}]$ by Lemma \ref{new}. There is no
guarantee that it is the lowest degree example.

\begin{equation}
\begin{array}{lcl} \notag
T(1/3,2/5) & = & {x}^{2}-3\,x+1 \\
T(1/4,2/7) & = & {x}^{2}-4\,x+1 \\
T(2/5,3/7) & = & {x}^{4}-716\,{x}^{3}+890\,{x}^{2}-369\,x+51 \\
T(1/3,3/8) & = & {x}^{2}-3\,x+1 \\
T(3/8,2/5) & = & {x}^{2}-3\,x+1 \\
T(1/5,2/9) & = & -{x}^{3}-20\,{x}^{2}+9\,x-1 \\
T(3/7,4/9) & = & -{x}^{3}+37\,{x}^{2}-32\,x+7 \\
T(2/7,3/10) & = & -{x}^{6}+1151931\,{x}^{5}-1691236\,{x}^{4}+993150\,{x}^{3}-291587\,{x}^{2}\\
& &+42802\,x-2513 \\
T(1/6,2/11) & = & -{x}^{3}-30\,{x}^{2}+11\,x-1 \\
T(1/4,3/11) & = & {x}^{2}-4\,x+1 \\
T(3/11,2/7) & = & -{x}^{6}-2359829\,{x}^{5}+3291253\,{x}^{4}-1836029\,{x}^{3}+512089\,{x}^{2}\\
&&-71410\,x+3983 \\
T(1/3,4/11) & = & {x}^{2}-6\,x+2 \\
T(4/11,3/8) & = & -{x}^{4}+830\,{x}^{3}-928\,{x}^{2}+346\,x-43 \\
T(4/9,{ {5 / 11}}) & = &
-{x}^{9}-29635158678\,{x}^{8}+106792009997\,{x}^{7}-
168361710540\,{x}^{6} \\
 & & +151671807240\,{x}^{5}-85396766648\,{x}^{4}+30771806151\,{x}^{3} \\
 &&-6930101424\,{x}^{2}+891832252\,x-50211113 \\
T(2/5,{ {5 / 12}}) & = & {x}^{2}+2\,x-1 \\
T({ {5 / 12}},3/7) & = & -{x}^{3}-26\,{x}^{2}+23\,x-5 \\
T(1/7,2/13) & = & -{x}^{3}-42\,{x}^{2}+13\,x-1 \\
T(2/9,3/13) & = & -{x}^{3}-20\,{x}^{2}+9\,x-1 \\
T(3/10,{ {4 / 13}}) & = &
-{x}^{6}-2627119\,{x}^{5}+3994979\,{x}^{4}-2429980\,{x}^{3}+739017\,{x}^{2}\\
&&-112375\,x+6835 \\
T(3/8,{ {5 / 13}}) & = & {x}^{2}-3\,x+1 \\
T({ {5 / 13}},2/5) & = & {x}^{2}-3\,x+1 \\
T({ {5 / 11}},{ {6 / 13}}) & = &
{x}^{5}+131482\,{x}^{4}-240886\,{x}^{3}+165494\,{
x}^{2}-50532\,x+5786 \\
T(1/5,3/14) & = & {x}^{2}-5\,x+1 \\
T(3/14,2/9) & = & -{x}^{3}+106\,{x}^{2}-46\,x+5 \\
T(1/3,{ {5 / 14}}) & = & {x}^{2}-6\,x+2 \\
T({ {5 / 14}},4/11) & = &
{x}^{5}+98683\,{x}^{4}-142309\,{x}^{3}+76957\,{x}^{2}-18496
\,x+1667 \\
T(1/8,2/15) & = & -{x}^{4}+4112\,{x}^{3}-1602\,{x}^{2}+208\,x-9 \\
T(1/4,{ {4 / 15}}) & = & {x}^{2}-8\,x+2 \\
T({ {4 / 15}},3/11) & = &
{x}^{5}+162382\,{x}^{4}-175226\,{x}^{3}+70906\,{x}^{2}-12752
\,x+860\\
T({ {6 / 13}},{ {7 / 15}}) & = &
-{x}^{12}+72422527702901325\,{x}^{11}-
369852358365610457\,{x}^{10}\\ &&+858538529519890462\,{x}^{9}
-1195753892838600326\,{x}^{8}\\ && +1110278480747979603\,{x}^{7}
-721638086761400063\,{x}^{6}
\\ & &+335025835998692775\,{x}^{5}-111098573305754871\,{x}^{4}\\
&&+25789093045603361\,{x}^{3}-3990908419523891\,{x}^{2} \\ &&+
370559601060925\,x-15639435102355\\
T(2/11,3/16) & = & {x}^{5}-16175\,{x}^{4}+12295\,{x}^{3}-3502\,{x}^{2}+443\,x-21 \\
T({ {4 / 13}},{ {5 / 16}}) & = &
{x}^{9}-369076253174\,{x}^{8}+917724840702\,{x}^{7}-998350365312\,{x}^{6}\\
&&+620599596183\,{x}^{5}-241110478731\,{x}^{4}+59951042224\,{x}^{3}\\
&&-9316515227\,{x}^{2}+827310070\,x-32140845 \\
T(3/7,{ {7 / 16}}) & = & {x}^{3}-37\,{x}^{2}+32\,x-7 \\
T({ {7 / 16}},4/9) & = &
-{x}^{6}-10576186\,{x}^{5}+23312009\,{x}^{4}-20553597\,{x}^{3}\\
&&+9060737\,{x}^{2}-1997132\,x+176079 \\
T(1/9,2/17) & = & {x}^{4}-1953\,{x}^{3}+676\,{x}^{2}-78\,x+3
\end{array}
\end{equation}

\begin{equation}
\begin{array}{lcl} \notag
T(1/6,{ {3 / 17}}) & = & {x}^{2}-6\,x+1\\
T({ {3 / 17}},2/11) & =
& {x}^{5}-164752\,{x}^{4}+117596\,{x}^{3}-31475\,{x}^{2}+3744
\,x-167\\
T(3/13,{ {4 / 17}}) & = &
-{x}^{6}+5654596\,{x}^{5}-6591735\,{x}^{4}+3073650\,{x}^{3}-
716598\,{x}^{2}\\ &&+83534\,x-3895 \\
T(2/7,{ {5 / 17}}) & = & -{x}^{3}-82\,{x}^{2}+48\,x-7 \\
T({ {5 / 17}},3/10) & = & -{x}^{4}-3986\,{x}^{3}+3547\,{x}^{2}-1052\,x+104 \\
T(1/3,{ {6 / 17}}) & = & {x}^{2}-6\,x+2 \\
T({ {6 / 17}},{ {5 / 14}}) & = &
-{x}^{12}-312068777674512248\,{x}^{11}+
1218635449662069926\,{x}^{10}\\
&&-2163086830555697775\,{x}^{9}+2303693455082796817\,{x}^{8}\\
&&-1635624110167518281\,{x}^{7}+812904818934872080\,{x}^{6}\\
&&-288580642413016261\,{x}^{5}+73175550293900447\,{x}^{4}\\
&&-12988595068142207\,{x}^{3}+1536973706418270\,{x}^{2}\\
&&-109124254036404\,x+3521703559324 \\
T(2/5,{ {7 / 17}}) & = & {x}^{2}+2\,x-1 \\
T({ {7 / 17}},{ {5 / 12}}) & = & {x}^{2}+2\,x-1 \\
T({ {7 / 15}},{ {8 / 17}}) & = & -{x}^{4}+6130\,{x}^{3}-8618\,{x}^{2}+4039\,x-631 \\
T(3/11,{ {5 / 18}}) & = &
{x}^{5}+303655\,{x}^{4}-334282\,{x}^{3}+137998\,{x}^{2}-
25319\,x+1742 \\
T({ {5 / 18}},2/7) & = & -{x}^{3}-89\,{x}^{2}+50\,x-7 \\
T({ {5 / 13}},{ {7 / 18}}) & = &
-{x}^{6}-6049372\,{x}^{5}+11706532\,{x}^{4}-
9061607\,{x}^{3}+3507125\,{x}^{2}\\
&&-678682\,x+52534 \\
T({ {7 / 18}},2/5) & = & {x}^{2}-3\,x+1 \\
T(1/10,2/19) & = & {x}^{4}-2710\,{x}^{3}+841\,{x}^{2}-87\,x+3 \\
T(2/13,{ {3 / 19}}) & = &
{x}^{6}+16300632\,{x}^{5}-12702977\,{x}^{4}+3959686\,{x}^{3}
-617135\,{x}^{2}\\ &&+48091\,x-1499 \\
T(1/5,{ {4 / 19}}) & = & {x}^{2}-5\,x+1 \\
T({ {4 / 19}},3/14) & = &
{x}^{9}-2941000101126\,{x}^{8}+4994011925448\,{x}^{7}
\\ & &
-3710051448922\,{x}^{6}
+1574961728536\,{x}^{5}-417866792428\,{x}^{4}
\\ & &
+70955073227\,{x}
^{3}-7530205493\,{x}^{2}+456656790\,x-12115709 \\
T(1/4,{ {5 / 19}}) & = & {x}^{2}-4\,x+1 \\
T({ {5 / 19}},{ {4 / 15}}) & = & {x}^{4}+3607\,{x}^{3}-2866\,{x}^{2}+759\,x-67 \\
T({ {5 / 16}},{ {6 / 19}}) & = &
-{x}^{8}+51931371494\,{x}^{7}-114207671161\,{x}^{
6}+107642363378\,{x}^{5}
\\ & &
-56363495447\,{x}^{4}+17707739051\,{x}^{3}-3337942176\,{x}^{ 2}
\\ & &
+349559613\,x-15688671 \\
T(4/11,{ {7 / 19}}) & = &
-{x}^{5}-167972\,{x}^{4}+246584\,{x}^{3}-135743\,{x}^{2}+
33211\,x-3047\\
T({ {7 / 19}},3/8) & = &
{x}^{4}+8594\,{x}^{3}-9574\,{x}^{2}+3555\,x-440\\
T({ {5 / 12}},{{8 / 19}}) & = &
{x}^{6}-13761534\,{x}^{5}+28842552\,{x}^{4}
\\ & &
- 24180158\,{x}^{3}+10135687\,{x}^{2}-2124300\,x+178089 \\
T({ {8 / 19}},3/7) & = & {x}^{3}-107\,{x}^{2}+90\,x-19\\
T({ {8 / 17}},{ {9 / 19}}) & = &
-{x}^{8}+63880292236\,{x}^{7}-211132804023\,{x}^{
6}+299066280893\,{x}^{5}
\\ & &
-235345759625\,{x}^{4}+111121028980\,{x}^{3}-31480170773\,{x }^{2}
\\ & &
+4954560070\,x-334191985 \\
T(1/7,{ {3 / 20}}) & = & {x}^{3}+231\,{x}^{2}-68\,x+5 \\
T({ {3 / 20}},2/13) & = &
-{x}^{6}+28336792\,{x}^{5}-21517541\,{x}^{4}+6535640\,{x}^{3
}-992538\,{x}^{2}\\ &&+75365\,x-2289 \\
T(1/3,{ {7 / 20}}) & = & {x}^{2}-6\,x+2 \\
T({ {7 / 20}},{ {6 / 17}}) & = &
{x}^{8}-70615270260\,{x}^{7}+173702478683\,{x}^{6
}-183120134018\,{x}^{5}
\\ & &
+107248975216\,{x}^{4}-37687812630\,{x}^{3}+7946199062\,{x}^{2}
\\ & &
-930775452\,x+46725407
\end{array}
\end{equation}

\begin{equation}
\begin{array}{lcl} \notag
T(4/9,{ {9 / 20}}) & = & {x}^{3}+224\,{x}^{2}-201\,x+45 \\
T({ {9 / 20}},{ {5 / 11}}) & = &
-{x}^{5}-28247\,{x}^{4}+50444\,{x}^{3}-33775\,{x}
^{2}+10049\,x-1121\\
T(1/11,2/21) & = & {x}^{4}-3641\,{x}^{3}+1024\,{x}^{2}-96\,x+3 \\
T(3/16,{ {4 / 21}}) & = &
-{x}^{8}+136425013870\,{x}^{7}-180508372914\,{x}^{6}+
102358062346\,{x}^{5}
\\ & &
-32245743882\,{x}^{4}+6094977959\,{x}^{3}-691227923\,{x}^{2}\\
&&+43550791\,x-1175959 \\
T({ {4 / 17}},{ {5 / 21}}) & = &
{x}^{6}+1414956\,{x}^{5}-1686559\,{x}^{4}+804089
\,{x}^{3}-191673\,{x}^{2}\\
&&+22844\,x-1089 \\
T(3/8,{ {8 / 21}}) & = & {x}^{2}-3\,x+1 \\
T({ {8 / 21}},{ {5 / 13}}) & = & {x}^{2}-3\,x+1 \\
T({ {9 / 19}},{ {10 / 21}}) & = &
{x}^{18}-265066219851470073896475021927\,{x}^{17}\\
&&+2140395330694655830972341091874\,{x}^{16}\\
&&-8133445821830247750162364615479\,{x}^{15}\\
&&+19316795672890032633988244072508\,{x}^{14}\\
&&-32113936273710937029720760450948\,{x}^{13}\\
&&+39660410718965991151182638887921\,{x}^{12}\\
&&-37677096594660667022412296504028\,{x}^{11}\\
&&+28123036244133465310172098724688\,{x}^{10}\\
&&-16697915529194766473201489538076\,{x}^{9}\\
&&+7931442994928916901189904965470\,{x}^{8}\\
&&-3013922280150590577654465661841\,{x}^{7}\\
&&+911018436460175951387551399941\,{x}^{6}\\
&&-216364915651909887212093381346\,{x}^{5}\\
&&+39527838685420394912701179067\,{x}^{4}\\
&&-5364441433555090728913121916\,{x}^{3}\\
&&+509616986326961914742507595\,{x}^{2}\\
&&-30258208210601324759757834\,x\\
&&+845441362748491768882081
\end{array}
\end{equation}


\begin{thebibliography}{11}

\bibitem{BE95} P. Borwein and T. Erd\'{e}lyi, {\it Polynomials and
Polynomial Inequalities}, Springer-Verlag, New York, 1995.
\bibitem{BE96} P. Borwein and T. Erd\'{e}lyi, {\it The integer Chebyshev
problem}, Math. Comp. {\bf 65} (1996), 661-681.
\bibitem{Ch83} G. V. Chudnovsky, {\it Number theoretic applications of
polynomials with rational coefficients defined by extremality
conditions}, Arithmetic and Geometry, Vol. I (M. Artin and J.
Tate, eds.), pp. 61-105, Birkh\"{a}user, Boston, 1983.
\bibitem{Fe23} M. Fekete, {\it \"{U}ber die Verteilung der Wurzeln bei
gewissen algebraischen Gleichungen mit ganzzahligen
Koeffizienten}, Math. Zeit. {\bf 17} (1923), 228-249.
\bibitem{Fe30} M. Fekete, {\it \"{U}ber den tranfiniten Durchmesser
ebener Punktmengen II}, Math. Zeit. {\bf 32} (1930), 215-221.
\bibitem{Fer80} Le Baron O. Ferguson, {\it Approximation by
Polynomials with Integral Coefficients}, Amer. Math. Soc.,
Providence, R.I., 1980.
\bibitem{FRS97}
V. Flammang, G. Rhin and C.J. Smyth, \textit{The integer
transfinite diameter of intervals and totally real algebraic
integers}, J. Th\'eor. Nombres Bordeaux \textbf{9} (1997),
137--168.
\bibitem{Go69} G. M. Goluzin, {\it Geometric Theory of Functions of a
Complex Variable}, Vol. 26 of Translations of Mathematical
Monographs, Amer. Math. Soc., Providence, R.I., 1969.
\bibitem{HS97}
L. Habsieger and B. Salvy, \textit{On integer Chebyshev
polynomials}, Math. Comp. \textbf{218} (1997), 763--770.
\bibitem{Hi1894} D. Hilbert, {\it Ein Beitrag zur Theorie des Legendreschen
Polynoms}, Acta Math. {\bf 18} (1894), 155-159.
\bibitem{Mo94} H. L. Montgomery, {\it Ten Lectures on the Interface
Between Analytic Number Theory and Harmonic Analysis}, CBMS, Vol.
84, Amer. Math. Soc., Providence, R.I., 1994.
\bibitem{Ok24} Y. Okada, {\it On approximate polynomials with
integral coefficients only}, Tohoku Math. J. {\bf 23} (1924),
26-35.
\bibitem{Pr00} I. E. Pritsker, {\it Small polynomials with integer
coefficients}, in press.
\bibitem{Ra95} T. Ransford, {\it Potential Theory in the Complex Plane},
Cambridge University Press, Cambridge, 1995.
\bibitem{Ri90} T. J. Rivlin, {\it Chebyshev Polynomials}, John Wiley
\& Sons, New York, 1990.
\bibitem{Tr71} R. M. Trigub, {\it Approximation of functions
with Diophantine conditions by polynomials with integral
coefficients}, in ``Metric Questions of the Theory of Functions
and Mappings", No. 2, Naukova Dumka, Kiev, 1971, pp. 267-333.
(Russian)
\bibitem{Ts75} M. Tsuji, {\it Potential Theory in Modern Function Theory},
Chelsea Publ. Co., New York, 1975.

\end{thebibliography}
\end{document}